\theoremstyle{definition}
\newtheorem{thm}{Theorem}[section]
\newtheorem{prop}[thm]{Proposition}
\newtheorem{lemma}[thm]{Lemma}
\numberwithin{equation}{section}
\newcommand{\R}{\mathbb{R}}
\newcommand{\ol}{\overline}
\renewcommand{\l}{\left}
\renewcommand{\r}{\right}
\renewcommand{\det}[1]{~\mbox{det}\left(#1\right)}
\renewcommand{\det}[1]{~\mbox{det}\left(#1\right)}
\newcommand{\eps}{\varepsilon}
\newcommand{\mR}{\mathbb{R}}
\newcommand{\mC}{\mathbb{C}}
\newcommand{\supp}{\mathop{\rm supp}}
\newcommand{\abs}[1]{\lvert #1 \rvert}
\title[Local gauge conditions for ellipticity in conformal geometry]{Local gauge conditions for ellipticity in conformal geometry}
\author{Tony Liimatainen}
\address{Department of Mathematics and Systems Analysis, Aalto University}
\email{tony.liimatainen@aalto.fi}
\author{Mikko Salo}
\address{Department of Mathematics and Statistics, University of Jyv\"askyl\"a}
\email{mikko.j.salo@jyu.fi}
\date{October 14, 2013}
\begin{document}

\begin{abstract}
In this article we introduce local gauge conditions under which many curvature tensors appearing in conformal geometry, such as the Weyl, Cotton, Bach, and Fefferman-Graham obstruction tensors, become elliptic operators. The gauge conditions amount to fixing an $n$-harmonic coordinate system and normalizing the determinant of the metric. We also give corresponding elliptic regularity results and characterizations of local conformal flatness in low regularity settings.
\end{abstract}

\maketitle

\section{Introduction}

In Riemannian geometry, various regularity results for curvature tensors may be obtained via harmonic coordinate systems. We recall the basic idea following \cite{Besse}, \cite{DK}, \cite{Taylor_isometries}.

Let $(M,g)$ be an $n$-dimensional Riemannian manifold, and let $(x^1, \ldots, x^n)$ be a local coordinate system. The Ricci tensor of $g$ has the expression  
$$
R_{ab} = -\frac{1}{2} \Delta g_{ab} + \frac{1}{2} (\partial_a \Gamma_b + \partial_b \Gamma_a) + \text{lower order terms,}
$$
where $\Delta$ is the Laplace-Beltrami operator, $\Gamma^l = g^{ab} \Gamma_{ab}^l$, and $\Gamma_l=g_{la}\Gamma^a$. One also has 
$$
\Gamma^l = -\Delta x^l.
$$
If the coordinate system is harmonic, in the sense that $\Delta x^l = 0$ for all $l$, then the Ricci tensor becomes an elliptic operator. Thus by elliptic regularity, if the Ricci tensor is smooth in harmonic coordinates, then also the metric is smooth in these coordinates. In particular, if the Riemann curvature tensor of a low regularity metric vanishes, this implies that the metric is smooth in harmonic coordinates and thus locally flat by classical arguments.


The Ricci tensor is of course invariant under diffeomorphisms, and these diffeomorphisms correspond to different gauges for the equation $\mathrm{Ric}(g) = h$. The choice of harmonic coordinates may be viewed as a local gauge condition that results in an elliptic equation. In this article we give similar local gauge conditions for the ellipticity of conformal curvature tensors, based on systems of $n$-harmonic coordinates. We say that a function $u$ in $(M,g)$ is $p$-harmonic ($1 < p < \infty$) if 
$$
\delta(\abs{du}^{p-2} du) = 0.
$$
Here $\abs{\,\cdot\,}$ is the $g$-norm and $\delta$ is the codifferential. A local coordinate system is called $p$-harmonic if each coordinate function is $p$-harmonic (the case $p=2$ of course corresponds to the usual harmonic coordinates).

In the earlier article \cite{LiimatainenSalo}, we established the existence of $p$-harmonic coordinate systems for $1 < p < \infty$ on any Riemannian manifold with $C^{r}$, $r>1$, metric tensor. In conformal geometry, the case $p=n$ is special since the class of $n$-harmonic functions is preserved under conformal transformations. This fact was used in \cite{LiimatainenSalo} to study the regularity of such maps: if one fixes $n$-harmonic coordinate systems both in the domain and target manifold, the components of a conformal mapping will solve an elliptic equation. Thus $n$-harmonic coordinates may be considered as a gauge condition for ellipticity in the regularity problem for conformal transformations. In any $n$-harmonic coordinate system, the following \emph{$n$-harmonic gauge condition} is valid \cite{LiimatainenSalo}:
\begin{equation} \label{nharmonic_gauge_condition}
\Gamma^k = -\frac{n-2}{2} \frac{g^{kr} g^{ka} g^{kb}}{g^{kk}} \partial_r g_{ab}.
\end{equation}

We now recall the definition of conformal curvature tensors. If $(M,g)$ is an $n$-dimensional Riemannian manifold, write $R_{abcd}$, $R_{ab}$, and $R$ for the Riemann curvature tensor, Ricci tensor, and scalar curvature, respectively. The Schouten tensor is defined as 
$$
P_{ab} = \frac{1}{n-2} \left( R_{ab} - \frac{R}{2(n-1)} g_{ab} \right).
$$
The Weyl tensor of $(M,g)$ is the $4$-tensor 
$$
W_{abcd} = R_{abcd} + P_{ac} g_{bd} - P_{bc} g_{ad} + P_{bd} g_{ac} - P_{ad} g_{bc},
$$
the Cotton tensor is the $3$-tensor 
$$
C_{abc} = \nabla_a P_{bc} - \nabla_b P_{ac},
$$
and the Bach tensor is the $2$-tensor
$$
B_{ab}=\nabla^k\nabla^lW_{akbl}+\frac{1}{2}R^{kl}W_{akbl}.
$$
We also consider the Fefferman-Graham obstruction tensor 
$$
\mathcal{O}_{ab}=\frac{1}{n-3}\Delta^{n/2-2}\nabla^k\nabla^lW_{akbl}+\text{lower order terms,}
$$
where $\Delta=\nabla^i\nabla_i$ and $n \geq 4$ is even. If $n=4$ then $\mathcal{O}_{ab} = B_{ab}$. These tensors have the following behavior under conformal scaling in various dimensions: $W(cg) = c W(g)$ for $n \geq 4$, $C(cg) = C(g)$ for $n=3$, and $\mathcal{O}(cg) = c^{-\frac{n-2}{2}} \mathcal{O}(g)$ for $n \geq 4$ even. See~\cite{AcheViaclovsky}, \cite{Bach}, \cite{Besse}, \cite{Der}, \cite{FG}, \cite{FGbook}, \cite{Tian} for additional information on these tensors.

If $T$ is one of the above tensors, the next result shows that the equation $T(g) = h$ becomes elliptic under two natural local gauge conditions: one writes both sides of the equation in $n$-harmonic coordinates and considers the conformally normalized metric $\hat{g}_{jk} = \abs{g}^{-1/n} g_{jk}$ with determinant one. Here $\abs{g} = \det{g_{jk}}$, and we use that $n$-harmonic coordinates for $g$ are also $n$-harmonic coordinates for the conformal metric $\hat{g}$.

\begin{thm} \label{thm_main_ellipticity}
Let $(M,g)$ be a smooth $n$-dimensional Riemannian manifold. The Weyl, Cotton, Bach, and Fefferman-Graham obstruction tensors are elliptic operators in $n$-harmonic coordinates, in the sense that after applying both the $n$-harmonic gauge condition \eqref{nharmonic_gauge_condition} and the condition $|\hat{g}|=1$, the linearizations of the resulting operators at $\hat{g}$ are elliptic.
\end{thm}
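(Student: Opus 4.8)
The plan is to compute, for each of the four tensors $T$, the principal symbol $\sigma(DT)(\xi)$ of its linearization at $\hat g$, and to show that once the two linearized gauge conditions are imposed on the symmetric $2$-tensor argument $h$ this symbol becomes injective, and in fact bijective for the two-tensor valued operators. The guiding observation is that all four tensors are natural under diffeomorphisms and carry a conformal weight, and that these two invariances force two families of directions into the kernel of every \emph{unrestricted} symbol. Differentiating a weight relation such as $W(cg)=cW(g)$ in the direction $h\propto\hat g$ produces only zeroth-order terms, so $\sigma(DT)(\xi)$ annihilates the conformal direction $h=\hat g$; and differentiating $T(\phi^*g)=\phi^*T(g)$ along a flow shows that $\sigma(DT)(\xi)$ annihilates the longitudinal directions $h_{ab}=\xi_a\omega_b+\xi_b\omega_a$. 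Hence ellipticity can hold only after these directions are removed, and the substance of the theorem is that the two gauge conditions remove exactly them.

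First I would identify the linearized gauge slice. The condition $|\hat g|=1$ linearizes to $\hat g^{ab}h_{ab}=0$, removing the conformal direction. The $n$-harmonic gauge condition \eqref{nharmonic_gauge_condition} is an identity in the chosen coordinates, and its linearization equates the de Donder symbol $\xi^a h_a{}^k-\tfrac12\xi^k\hat g^{cd}h_{cd}$ of $D\Gamma^k$ with the symbol of the first-order right-hand side of \eqref{nharmonic_gauge_condition}. Working at a point in coordinates normalized so that $\hat g_{ij}=\delta_{ij}$ and $\xi=e_1$, the right-hand side of \eqref{nharmonic_gauge_condition} collapses to $-\tfrac{n-2}{2}\partial_k g_{kk}$ (no sum on $k$), and the linearized condition becomes $h_{1k}=0$ for $k\ge2$ together with $n\,h_{11}=\hat g^{cd}h_{cd}$ for $k=1$. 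Combined with trace-freeness this forces $h_{1k}=0$ for every $k$. Thus after both gauges the admissible $h$ are precisely the trace-free symmetric tensors on the hyperplane $\xi^\perp$; the longitudinal directions are gone, as required.

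Because the admissible $h$ satisfy $\iota_\xi h=0$ and $\hat g^{ab}h_{ab}=0$, the divergence- and trace-type de Donder terms present in every symbol drop out, so each operator reduces on the slice to its leading Laplace-type part; this is where the expression for $R_{ab}$ recalled above is used. Concretely the Schouten symbol becomes a nonzero multiple $-\tfrac{1}{2(n-2)}h_{ab}$. The Weyl symbol is the Riemann symbol $-\tfrac12(\xi_a\xi_c h_{bd}-\xi_a\xi_d h_{bc}-\xi_b\xi_c h_{ad}+\xi_b\xi_d h_{ac})$ plus the Schouten contribution from its defining formula, and its components with $i,j\ge2$ are $W_{i1j1}=-\tfrac12\tfrac{n-1}{n-2}h_{ij}$, so it is injective. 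The remaining tensors factor through the Weyl symbol: the Bach symbol is $\xi^k\xi^l\sigma(DW_{akbl})(\xi)$, equal at $\xi=e_1$ to $\sigma(DW_{i1j1})$, again a nonzero multiple of $h_{ij}$; since $\xi^a\xi^k W_{akbl}=0$ and $\hat g^{ab}W_{akbl}=0$ its image is automatically trace-free and longitudinal-free, so it is a bijection of the trace-free tensors on $\xi^\perp$. The obstruction symbol is $|\xi|^{n-4}\xi^k\xi^l\sigma(DW_{akbl})(\xi)$ up to lower-order terms and hence differs from Bach only by the positive factor $|\xi|^{n-4}$, while the Cotton symbol $\xi_a\sigma(DP_{bc})-\xi_b\sigma(DP_{ac})$ is injective by the same Schouten computation in dimension three.

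The main obstacle is exactly the nondegeneracy claimed in the previous paragraph. Because diffeomorphism and conformal covariance already force every unrestricted symbol to vanish on the longitudinal and conformal directions, ellipticity is equivalent to verifying that nothing \emph{else} survives in the kernel; the delicate step, for Bach and the obstruction tensor, is that the double contraction with $\xi^k\xi^l$ could in principle re-annihilate admissible directions, so one must confirm that the surviving components $W_{i1j1}$ are a nondegenerate multiple of $h_{ij}$ and that the output space is precisely the trace-free tensors on $\xi^\perp$. A secondary technical point is to justify that the normalization $\hat g_{ij}=\delta_{ij}$, $\xi=e_1$ can be arranged compatibly with the $n$-harmonic gauge, which I would do using the freedom in prescribing the differentials of the $n$-harmonic coordinate functions at a point from \cite{LiimatainenSalo}.
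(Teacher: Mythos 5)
Your proposal diverges from the paper in what ``applying the gauge conditions'' means, and the divergence hides the actual difficulty. In the paper the gauge conditions are \emph{substituted into the operator}: $\Gamma^k$ in the Bach formula is replaced by the non-tensorial first-order operator $\tilde{\Gamma}^k(h) = -\tfrac{n-2}{2} \tfrac{g^{kr} g^{ka} g^{kb}}{g^{kk}} \partial_r h_{ab}$ (Lemma \ref{lemma_bach_nharmonic}), and ellipticity means that the principal symbol of the linearization of the resulting single operator is injective on \emph{all} symmetric $h$ (Lemma \ref{ellipt_calc}). You instead restrict the perturbation $h$ to the linearized gauge slice and prove injectivity only there. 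That is a genuinely weaker statement: it does not produce an operator with injective principal symbol in the sense required by Proposition \ref{elliptic_regularity_differential_operator}, which is how Theorem \ref{thm_main_ellipticity} gets used in Theorem \ref{thm_main_regularity}. To exploit a slice statement you would have to augment the curvature equation with the (differentiated, to homogenize orders) gauge equations into an overdetermined system and verify its combined symbol is injective --- bookkeeping your proposal does not carry out.

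More seriously, the normalization $\hat{g}_{ij} = \delta_{ij}$, $\xi = e_1$ on which your whole symbol computation rests is not available. The right-hand side of \eqref{nharmonic_gauge_condition} is not a tensor: its symbol $-\tfrac{n-2}{2} \xi^k h^{kk}/g^{kk}$ is tied to the coordinate axes, it is not equivariant under rotations or linear maps, and linear changes of coordinates do not preserve $n$-harmonicity ($p$-harmonic functions do not form a linear space for $p \neq 2$). Using the freedom in \cite{LiimatainenSalo} you can indeed arrange $n$-harmonic coordinates with $\hat{g}(p) = \delta$, but ellipticity must then be checked for \emph{every} $\xi \neq 0$ in that fixed chart, and for $\xi$ not along a coordinate axis the linearized gauge condition does not reduce to $h_{1k} = 0$: with $g = \delta$ and trace-free $h$ it reads $\xi^a h_{ak} = -\tfrac{n-2}{2} \xi_k h_{kk}$ (no sum over $k$), which does not force the contraction of $h$ with $\xi$ to vanish. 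So for generic $\xi$ the slice is not the transverse trace-free tensors, your reduction of each symbol to its Laplace-type part collapses, and the remaining kernel analysis for general $\xi$ and general $g$ is precisely the content of the paper's Lemma \ref{ellipt_calc}: one computes the diagonal components $(q(\xi)h)^{aa}$ of the gauge-substituted symbol, factors out $g^{aa} \abs{\xi}^2 + (n-2)(\xi^a)^2$, deduces $h^{11}/g^{11} = \cdots = h^{nn}/g^{nn} = \lambda$ with $\lambda \left( 1 - \tfrac{n-2}{2(n-1)} \right) = 0$, hence $h = 0$. That diagonal argument --- the actual heart of the theorem --- has no counterpart in your proposal. (Your slice algebra itself is sound where it applies: the value $\sigma(W_{i1j1}) = -\tfrac{1}{2} \tfrac{n-1}{n-2} h_{ij}$ on transverse trace-free tensors is correct, and deriving Weyl, Cotton and obstruction ellipticity from the Bach symbol via contraction with $\xi^k \xi^l$ is exactly the paper's route in the proof of Theorem \ref{thm_main_ellipticity}.)
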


After interpreting the conformal curvature tensors of low regularity metrics in a suitable sense, elliptic regularity results will follow readily. The regularity conditions will be given in terms of tensors $W_{abc\phantom{d}}^{\phantom{abc}d}$, $C_{abc}$, $\abs{g}^{\frac{n-2}{2n}}\mathcal{O}_{ab}$ which are invariant in conformal scaling $g \mapsto cg$. We use the notation $C_*^r$ for the Zygmund spaces~\cite{T3}.

\begin{thm} \label{thm_main_regularity}
Let $M$ be a smooth $n$-dimensional manifold, and let $g \in C^r_*$ in some system of local coordinates.
\begin{enumerate}
\item[(a)]
If $n \geq 4$, $r > 1$, and $W_{abc\phantom{d}}^{\phantom{abc}d} \in C^s_*$ for some $s > r-2$ in $n$-harmonic coordinates, then $\abs{g}^{-1/n} g_{jk} \in C^{s+2}_*$ in these coordinates.
\item[(b)]
If $n = 3$, $r > 2$, and $C_{abc} \in C^s_*$ for some $s > r-3$ in $n$-harmonic coordinates, then $\abs{g}^{-1/n} g_{jk} \in C^{s+3}_*$ in these coordinates.
\item[(c)]
If $n \geq 4$ is even, $r > n-1$, and $\abs{g}^{\frac{n-2}{2n}}\mathcal{O}_{ab} \in C^s_*$ for some $s > r-n$ in $n$-harmonic coordinates, then $\abs{g}^{-1/n} g_{jk} \in C^{s+n}_*$ in these coordinates.
\end{enumerate}
\end{thm}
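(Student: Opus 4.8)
The plan is to reduce each of the three statements to an elliptic regularity (bootstrap) argument for the conformally normalized metric $\hat{g}_{jk} = \abs{g}^{-1/n} g_{jk}$, with Theorem \ref{thm_main_ellipticity} supplying the required ellipticity. First I would fix an $n$-harmonic coordinate system, whose existence for $C^r_*$ metrics with $r>1$ is guaranteed by \cite{LiimatainenSalo}; in such coordinates the gauge condition \eqref{nharmonic_gauge_condition} holds, and by the conformal invariance of $n$-harmonicity the same coordinates are $n$-harmonic for $\hat{g}$, which additionally satisfies $\abs{\hat{g}}=1$. The point of imposing these two gauges simultaneously is that together they remove the diffeomorphism and conformal scaling freedoms that would otherwise prevent the tensors from being elliptic. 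A first technical step is to record the regularity of $\hat{g}$ in these coordinates: since the $n$-harmonic transition maps have controlled regularity, a $C^r_*$ metric in the original coordinates yields an at-least-$C^r_*$ normalized metric $\hat{g}$ in the $n$-harmonic ones, giving the a priori regularity from which the iteration starts.

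Next I would set up the elliptic equation. For each tensor I interpret the hypothesis, read in the suitable weak sense for low-regularity metrics, as prescribing $T(\hat{g}) = h$, where $h$ is the given conformally invariant combination of the data — namely $W_{abc}{}^{d}$, $C_{abc}$, or $\abs{g}^{(n-2)/(2n)}\mathcal{O}_{ab}$ — lying in $C^s_*$. By Theorem \ref{thm_main_ellipticity}, once the two gauge conditions are in force the linearization $L$ of $T$ at $\hat{g}$ is elliptic, of order $2$ for $W$, of order $3$ for $C$, and of order $n$ for $\mathcal{O}$. These are precisely the orders reflected in the regularity gains $s+2$, $s+3$, and $s+n$ of parts (a), (b), (c). Writing $T(\hat{g}) = L\hat{g} + (\text{lower order in } \hat{g})$, the equation becomes a quasilinear elliptic system whose principal part is elliptic with coefficients built from $\hat{g}$ and its derivatives.

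The regularity conclusion then follows from an elliptic bootstrap in the Zygmund scale. Starting from the a priori regularity of $\hat{g}$, I apply Schauder-type estimates for $L$: if the right-hand side lies in $C^\sigma_*$ and the coefficients are sufficiently regular, then $\hat{g}$ gains the order of the operator and lands in $C^{\sigma+k}_*$, and iterating raises $\sigma$ until it reaches $s$, giving $\hat{g}\in C^{s+k}_*$ with $k\in\{2,3,n\}$. The thresholds $r>1$, $r>2$, $r>n-1$ are exactly what is needed to define the respective tensors weakly and to start the iteration, while the hypotheses $s>r-2$, $s>r-3$, $s>r-n$ guarantee that each bootstrap step is a genuine gain in regularity rather than a loss.

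The main obstacle I expect is the low-regularity, nonlinear nature of the elliptic estimate: the coefficients of $L$ depend on $\hat{g}$, which is only Zygmund regular, so smooth-coefficient elliptic theory does not apply directly. I would handle this via paradifferential calculus and paralinearization in the sense of \cite{T3}, decomposing each nonlinear term into a paraproduct with the rough coefficients plus a remainder that is smoother by the full order $k$, and establishing microlocal elliptic estimates for the resulting paradifferential operator. The delicate bookkeeping is to track the exact Zygmund exponents through these paraproduct estimates so that at every stage the remainder is strictly more regular than the leading term, and this is precisely where the quantitative conditions on $r$ and $s$ are consumed.
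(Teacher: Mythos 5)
Your outline follows the paper's own proof almost step for step: fix $n$-harmonic coordinates (\cite[Theorem 2.1]{LiimatainenSalo}), note that $g \in C^r_*$ persists in them --- the transition-map regularity you wave at is precisely \cite[Proposition 2.5]{LiimatainenSalo} --- pass to $\hat{g}_{jk} = \abs{g}^{-1/n} g_{jk}$ with $\abs{\hat{g}}=1$ using conformal invariance of both the tensors and the $n$-harmonic equation, and then run an overdetermined elliptic bootstrap in the Zygmund scale, iterating finitely many times when $s \geq r$. Your paralinearization plan is exactly what the paper's Proposition \ref{elliptic_regularity_differential_operator} packages: a linear regularity result for injective-symbol operators with $C^r_*$ coefficients, proved in the appendix by symbol smoothing $p = p^{\sharp} + p^{\flat}$ and a left parametrix $Q\,(p^{\sharp})^t(x,D)$ in the sense of \cite{T3}, applied to the quasilinear system with coefficients frozen at $\hat{g}$. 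So on the analytic side there is no real divergence from the paper.

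The one genuine gap is your appeal to Theorem \ref{thm_main_ellipticity} for ellipticity of the linearization \emph{at the rough metric} $\hat{g}$. That theorem is stated and proved for smooth metrics, and its proof obtains the Weyl symbol by contraction against the Bach tensor through $B_{ab}=\nabla^k\nabla^l W_{akbl}+\frac{1}{2}R^{kl}W_{akbl}$, whose derivation uses the Bianchi identities and hence about four derivatives of the metric; for $\hat{g} \in C^r_*$ with $r>1$ the Bach tensor is not even defined, so the smooth-case ellipticity cannot simply be quoted. The paper closes this with Lemma \ref{weyl_princip}: it rewrites the Weyl tensor so that the Bianchi identities are only needed \emph{at the symbol level} (the identities \eqref{Bianchi_emulated1}, \eqref{Bianchi_emulated2}, which are checked to hold for $C^r_*$ metrics, $r>1$), derives the contracted formula \eqref{weyl_contr}, and then reuses the purely algebraic computation of Lemma \ref{ellipt_calc} to conclude injectivity of the gauged principal symbol. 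You would need this lemma, or at least an explicit argument that the gauged principal symbol depends only pointwise-algebraically on $\hat{g}$ so that the smooth computation transfers by freezing coefficients, before your bootstrap can start; the same caveat applies to the order-$3$ and order-$n$ symbols in parts (b) and (c), which the paper extracts at the symbol level from $(3-n)C_{abc} = \nabla^l W_{abcl}$ and $(n-3)\mathcal{O}_{ab} = \Delta^{n/2-2}\nabla^k\nabla^l W_{akbl} + T_{n-1}$.
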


Combining the previous theorem with classical results for $C^3$ metrics, one obtains conditions for local conformal flatness of low regularity metrics via the vanishing of Weyl and Cotton tensors.

\begin{thm} \label{thm_main_conformalflatness}
Let $M$ be a smooth $n$-dimensional manifold, and let $g\in C^r_*$ in some system of local coordinates defined near $p \in M$.
\begin{enumerate}
\item[(a)]
If $n \geq 4$, $r > 1$, and if $W_{abcd} = 0$ near $p$, then $g_{jk} = c \delta_{jk}$ for some positive function $c \in C^r_*$ in some $n$-harmonic coordinates near $p$.
\item[(b)]
If $n = 3$, $r > 2$, and if $C_{abc} = 0$ near $p$, then $g_{jk} = c \delta_{jk}$ for some positive function $c \in C^r_*$ in some $n$-harmonic coordinates near $p$.
\item[(c)]
If $n \geq 4$ is even, $r > n-1$, and if $\mathcal{O}_{ab} = 0$ near $p$, then $\abs{g}^{-1/n} g_{jk}$ is $C^{\infty}$ in any system of $n$-harmonic coordinates near $p$.
\end{enumerate}
\end{thm}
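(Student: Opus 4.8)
The plan is to handle all three parts by a common first step and then specialize. In every case I would fix a system of $n$-harmonic coordinates near $p$, pass to the conformally normalized metric $\hat g_{jk} = \abs{g}^{-1/n} g_{jk}$, and use Theorem \ref{thm_main_regularity} to bootstrap $\hat g$ to $C^\infty$. The hypotheses give the vanishing of the conformally invariant tensors $W_{abc}{}^d$, $C_{abc}$, and $\abs{g}^{\frac{n-2}{2n}}\mathcal{O}_{ab}$, since these are obtained from the displayed tensors by raising an index or multiplying by a positive power of $\abs{g}$, and so vanish exactly when the displayed ones do. A vanishing tensor lies in $C^s_*$ for every $s$, so the relevant case of Theorem \ref{thm_main_regularity} applies for all admissible $s$; because the source is zero, the constraint $s>r-2$ (resp.\ $r-3$, $r-n$) imposes no upper bound on $s$. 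Re-applying the estimate with $\hat g$ (which satisfies $\abs{\hat g}=1$, so that $\widehat{\hat g}=\hat g$) in the role of $g$ and its improved regularity in the role of $r$, I would iterate to conclude $\hat g \in C^\infty$ in these coordinates. This already establishes part (c), for which no flatness assertion is made.

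For parts (a) and (b), $\hat g$ is now a smooth metric in the conformal class of $g$, and by the conformal scaling of the Weyl and Cotton tensors its Weyl tensor (for $n\geq 4$) resp.\ Cotton tensor (for $n=3$, where the Weyl tensor vanishes identically) is zero. I would then invoke the classical characterizations of local conformal flatness for smooth metrics: the Weyl--Schouten theorem for $n\geq 4$ and its Cotton-tensor analogue for $n=3$. These produce smooth local coordinates $z$ near $p$ in which $\hat g$, and hence also $g$, is a pointwise positive multiple of the Euclidean metric, $g_{jk} = c\,\delta_{jk}$. Because the coordinate change to $z$ is smooth, the regularity of $g$ is unchanged, and since $g_{jk}=c\,\delta_{jk}$ forces $c = \abs{g}^{1/n}$ in the $z$-coordinates, we obtain $c \in C^r_*$.

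Finally I would verify that $z$ is an $n$-harmonic coordinate system for $g$, which is where the distinguished role of the exponent $p=n$ enters. The operator $\delta(\abs{du}^{n-2}du)$ is conformally invariant in dimension $n$, so $z^j$ is $n$-harmonic for $g = c\,\delta_{jk}$ precisely when it is $n$-harmonic for the flat metric $\delta_{jk}$; and for the flat metric $\abs{dz^j}$ is constant, so $\delta(\abs{dz^j}^{n-2}dz^j) = -\Delta z^j = 0$. Hence $z$ realizes $g_{jk}=c\,\delta_{jk}$ in $n$-harmonic coordinates, completing (a) and (b). The argument rests on the two standard ingredients rather than on a single hard step, so the points requiring the most care are checking that the bootstrap genuinely terminates at $C^\infty$ regularity, so that the classical smooth theorems are applicable, and tracking the conformal factor closely enough to land in $C^r_*$ rather than in a weaker space.
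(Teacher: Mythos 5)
Your proposal is correct and follows essentially the same route as the paper: apply Theorem \ref{thm_main_regularity} with the vanishing (hence $C^s_*$ for every $s$) conformally invariant tensor to conclude $\hat{g}=\abs{g}^{-1/n}g$ is $C^\infty$ in $n$-harmonic coordinates, invoke the classical smooth characterization of local conformal flatness for $\hat g$ in parts (a) and (b), recover $c=\abs{g}^{1/n}\hat c \in C^r_*$, and note that the new coordinates remain $n$-harmonic. The only cosmetic difference is that you verify the $n$-harmonicity of the flat coordinates directly from the conformal invariance of $\delta(\abs{du}^{n-2}du)$ and the constancy of $\abs{dz^j}$ for the flat metric, whereas the paper cites this as \cite[Proposition 2.6]{LiimatainenSalo}; your direct argument is precisely the content of that proposition.
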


There are a number of results in conformal geometry that employ various gauges to obtain elliptic or parabolic regularity and existence results. The works \cite{AcheViaclovsky}, \cite{Anderson}, \cite{Helliwell}, \cite{Tian} involve constant scalar curvature gauges, sometimes obtained via the solution of the Yamabe problem \cite{LeeParker}, and study asymptotically locally Euclidean manifolds with obstruction flat metrics or boundary regularity of conformally compact Einstein metrics. In~\cite{BahuaudHelliwell} a version of the DeTurck trick is used to show that certain analogues of the Ricci flow, including a modified flow involving the obstruction tensor, are locally well posed. In~\cite{GurskyViaclovsky1}, \cite{GurskyViaclovsky2} quadratic curvature functionals are studied with an emphasis on the structure of the space of their critical metrics. In these papers the corresponding Euler-Lagrange equations become elliptic via suitable gauge conditions.

Let us describe an argument from \cite{Anderson}, \cite{Helliwell}, \cite{Tian} for Bach flat metrics with $n=4$. Choosing a conformal metric with constant scalar curvature, the equation of Bach flatness for the conformally scaled metric reads
$$
\nabla^k \nabla_k R_{ab} + \text{lower order terms} = 0.
$$
This becomes a fourth order elliptic equation in harmonic coordinates for the conformally scaled metric. This fact could be used to give another proof of an analogue of Theorem~\ref{thm_main_regularity}, if a solution to the (local) Yamabe problem for sufficiently low regularity metrics is provided.

We thus have two possible sets of gauge conditions for ellipticity of conformal curvature tensors: ($n$-harmonic coordinates + determinant one) and (constant scalar curvature + harmonic coordinates). Let us compare these briefly. The first approach works for $C^{1,\alpha}$ metrics with $\alpha > 0$ and relies on an $n$-harmonic coordinate system. In \cite{LiimatainenSalo} such coordinates $(u^1,\ldots,u^n)$ near $p$ were constructed by choosing some smooth coordinates $(x^1,\ldots,x^n)$ near $p$ and then solving for each $j$ the Dirichlet problem 
$$
\delta(\abs{du^j}^{n-2} du^j) = 0 \text{ in } B(p,\eps), \quad u^j|_{\partial B(p,\eps)} = x^j.
$$
An easy variational argument shows that this problem has a unique solution. The point is to show that $(u^1,\ldots,u^n)$ is a $C^1$ diffeomorphism if $\eps$ is small enough, and this relies on standard regularity theory for quasilinear equations (essentially the fact that any $p$-harmonic function is $C^{1,\alpha}$ regular). Given $n$-harmonic coordinates, the conformal normalization (determinant one) is trivial.

The second approach based on a constant scalar curvature gauge requires solving the Yamabe problem (a semilinear equation with a constraint) at least in a small neighborhood; the global Yamabe problem is solvable for $C^{1,1}$ metrics (see \cite[Section VII.7]{ChoquetBruhat}), and we refer to \cite{Escobar}, \cite{HG}, \cite{Ma} for versions of the local problem. The recent paper \cite{HG} deals with $W^{1,p}$, $ p > n$, metrics. Given a conformal metric with constant scalar curvature, the other gauge condition involves a standard harmonic coordinate system.

A possible benefit of the $n$-harmonic coordinate approach is the similarity to harmonic coordinates in Riemannian geometry. In fact, it appears that $n$-harmonic coordinates are a natural generalization of harmonic coordinates in conformal geometry, and other Riemannian applications of harmonic coordinates might have conformal analogues. On the other hand, $n$-harmonic coordinates exist only locally, and if global gauge conditions are required then scalar curvature gauges are available via the Yamabe problem. Finally, $n$-harmonic coordinates exist at least for $C^{1,\alpha}$ metrics whereas global results for the Yamabe problem in the literature seem to require metrics with two derivatives. One motivation for this work was to find characterizations of local conformal flatness beyond the classical $C^3$ case, and it is an interesting question (also asked in \cite[Section 2.7]{IwaniecMartin}) if one can have such characterizations for very low regularity metrics.

\vspace{12pt}
\noindent {\bf Acknowledgements.}
T.L.~was supported in part by the Finnish National Graduate School on Mathematics and its Applications, and M.S.~is partly supported by the Academy of Finland and an ERC Starting Grant. The second author is grateful to Spyros Alexakis, Michael Eastwood, Robin Graham and Gantumur Tsogtgerel for helpful discussions, and in particular to Robin Graham for pointing out several useful references.

\section{Symbol computations} \label{section_symbolcomputations}

In this section we assume that the metric is smooth ($=C^{\infty}$) in some system of local coordinates. We start by recalling standard local coordinate formulas. The Christoffel symbols are given by 
$$
\Gamma_{ab}^k = \frac{1}{2} g^{kl}(\partial_a g_{bl} + \partial_b g_{al} - \partial_l g_{ab}).
$$
Define 
$$
\Gamma^k = g^{ab} \Gamma_{ab}^k, \qquad \Gamma_k = g_{kl} \Gamma^l.
$$
Noting the identity $g^{ab} \partial_l g_{ab} = \partial_l(\log\abs{g})$, we see that 
\begin{equation*}
\Gamma^k = -\partial_l g^{kl} - \frac{1}{2} g^{kl} \partial_l(\log\abs{g}), \qquad 
\Gamma_j = g^{kl} \partial_l g_{jk} - \frac{1}{2} \partial_j(\log\abs{g}). 
\end{equation*}
This also implies that 
$$
\Gamma_{ab}^a = \frac{1}{2} \partial_b(\log\abs{g}).
$$
The Riemann curvature tensor is given by 
$$
R_{abcd} = \langle (\nabla_a \nabla_b - \nabla_b \nabla_a) \partial_c, \partial_d \rangle.
$$
Here $\nabla_a \nabla_b \partial_c = \nabla_a (\Gamma_{bc}^m \partial_m) = \partial_a \Gamma_{bc}^m \partial_m + \Gamma_{bc}^m \Gamma_{am}^r \partial_r$, so that 
\begin{align*}
R_{abcd} = \partial_a \Gamma_{bc}^m g_{md} + \Gamma_{bc}^m \Gamma_{am}^r g_{rd} - \partial_b \Gamma_{ac}^m g_{md} - \Gamma_{ac}^m \Gamma_{bm}^r g_{rd}.
\end{align*}
The Ricci tensor, $R_{bc} = R_{abc}^{\phantom{abc}a}$, is given by 
\begin{align*}
R_{bc} &= \partial_a \Gamma_{bc}^a - \partial_b \Gamma_{ac}^a + \Gamma_{bc}^m \Gamma_{am}^a - \Gamma_{ac}^m \Gamma_{bm}^a \\
 &= \partial_a \Gamma_{bc}^a - \Gamma_{ac}^m \Gamma_{bm}^a - \frac{1}{2} \partial_{bc}(\log\abs{g}) + \frac{1}{2} \Gamma_{bc}^m \partial_m(\log\abs{g}).
\end{align*}
The scalar curvature is $R = g^{bc} R_{bc}$. The Schouten, Weyl, Cotton and Bach tensors were defined in the introduction. By using Bianchi identities as in \cite{Tian}, the Cotton and Bach tensors can also be written as 
\begin{gather*}
(3-n) C_{abc} = \nabla^l W_{abcl}, \\
B_{ab} = (3-n)(- \nabla^k \nabla_k P_{ab} + \nabla^k \nabla_a P_{bk}) + \frac{1}{2} R^{kl} W_{akbl}.
\end{gather*}

Below, we write $T_k$ for quantities that depend smoothly on components of the metric and their derivatives up to order $k$. We will also write 
$$
L = g^{ab} \partial_{ab}, \qquad L^2 = g^{ab} g^{cd} \partial_{abcd}
$$
for the principal parts of the Laplace-Beltrami operator and its square.

\subsection{Bach tensor}
All ellipticity results in this paper ultimately reduce to the fact that the Bach tensor is elliptic in $n$-harmonic coordinates when acting on metrics with determinant one. In the following, if $T=T(g)$ is a nonlinear differential operator acting on metrics, we denote by $\sigma(T)$ the principal symbol of its linearization at $g$.

\begin{lemma} \label{lemma_bach_computation}
The components of the Bach tensor satisfy 
\begin{align*}
\frac{2 (n-2)}{3-n} B_{ab} = L^2 &(g_{ab}) - L (\partial_a \Gamma_b + \partial_b \Gamma_a) \\
 &+ \frac{n-2}{n-1} \partial_{abl} \Gamma^l + \frac{1}{n-1} L(\partial_l \Gamma^l) g_{ab} + T_3
\end{align*}
in any local coordinate system in which $\abs{g}=1$.
\end{lemma}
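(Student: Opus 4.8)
The plan is to start from the Bianchi-identity form of the Bach tensor recalled just above the lemma, namely $B_{ab} = (3-n)(-\nabla^k\nabla_k P_{ab} + \nabla^k\nabla_a P_{bk}) + \frac{1}{2} R^{kl}W_{akbl}$, and to extract only the genuinely fourth-order part in $g$, collecting everything else into $T_3$. Writing $\equiv$ for equality modulo $T_3$, I would first note that the quadratic curvature term $\frac{1}{2} R^{kl}W_{akbl}$ is a product of two second-order quantities and so contains no derivative of $g$ of order higher than two; hence it already lies in $T_2 \subset T_3$ and may be discarded.

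Next I would reduce the two fourth-order terms to partial derivatives. Expanding the covariant derivatives, every Christoffel correction produces a factor of the form $\Gamma\cdot\partial P$, $\partial\Gamma\cdot P$, or $\Gamma\cdot\Gamma\cdot P$, each involving at most three derivatives of the metric (as $P$ is second order); thus $\nabla^k\nabla_k P_{ab}\equiv L(P_{ab})$ and $\nabla^k\nabla_a P_{bk}\equiv g^{kl}\partial_l\partial_a P_{bk}$. It then remains to insert the principal part of the Schouten tensor, and this is where the gauge $\abs{g}=1$ enters: it gives $\Gamma_j = g^{kl}\partial_l g_{jk}$ and $L(\log\abs{g})=0$, so that $R\equiv\partial_l\Gamma^l$ and
$$
P_{ab}\equiv\frac{1}{n-2}\Big(-\tfrac12 L(g_{ab}) + \tfrac12(\partial_a\Gamma_b + \partial_b\Gamma_a) - \tfrac{1}{2(n-1)}(\partial_l\Gamma^l) g_{ab}\Big)
$$
modulo $T_1$; applying $L$ or $g^{kl}\partial_l\partial_a$ turns this $T_1$ error into $T_3$.

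The core of the argument is a careful reduction of the resulting fourth-order expressions to the canonical quantities $L^2(g_{ab})$, $L(\partial_a\Gamma_b + \partial_b\Gamma_a)$, $\partial_{abl}\Gamma^l$, and $L(\partial_l\Gamma^l)g_{ab}$. For $L(P_{ab})$ this is direct, using $L(Lg_{ab})\equiv L^2(g_{ab})$ and $L((\partial_l\Gamma^l)g_{ab})\equiv L(\partial_l\Gamma^l)g_{ab}$. For $g^{kl}\partial_l\partial_a P_{bk}$ I would freeze the metric coefficients modulo $T_3$ and use the identities $g^{kl}g_{bk}=\delta^l_b$, $g^{kl}\Gamma_k=\Gamma^l$, and $g^{kl}\partial_k\partial_l = L$, together with commutativity of partials, to rewrite the three contributions: the $L(g_{bk})$ piece becomes $-\tfrac12 L(\partial_a\Gamma_b)$, the $(\partial_b\Gamma_k + \partial_k\Gamma_b)$ piece becomes $\tfrac12\partial_{abl}\Gamma^l + \tfrac12 L(\partial_a\Gamma_b)$, and the trace piece becomes $-\tfrac{1}{2(n-1)}\partial_{abl}\Gamma^l$.

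The step I expect to require the most care, and where a small but crucial simplification occurs, is this last reduction: the two $L(\partial_a\Gamma_b)$ contributions in $g^{kl}\partial_l\partial_a P_{bk}$ cancel exactly, leaving only $\tfrac{1}{2(n-1)}\partial_{abl}\Gamma^l$, which is what restores symmetry in $a,b$ and produces the stated coefficients. Assembling the two pieces with the factor $(3-n)$ and clearing the $(n-2)$ and the overall factor of $2$ then yields the claimed identity. The main bookkeeping obstacle throughout is tracking precisely which terms are genuinely of order four and ensuring that every contraction of a differentiated Christoffel symbol is expressed through $\Gamma^l$ rather than $\Gamma_k$, which is exactly the point at which $\abs{g}=1$ is used.
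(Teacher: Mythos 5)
Your proposal is correct and is essentially the paper's own proof: both start from the Bianchi-identity form $B_{ab}=(3-n)(-\nabla^k\nabla_k P_{ab}+\nabla^k\nabla_a P_{bk})+\frac{1}{2}R^{kl}W_{akbl}$, discard the quadratic curvature term as lower order, reduce covariant to partial derivatives modulo $T_3$, and exploit $\abs{g}=1$ via $\Gamma_b=g^{km}\partial_k g_{mb}$ and $R=\partial_l\Gamma^l+T_1$ --- the only difference is bookkeeping, since you substitute the harmonic-coordinate form of the Ricci tensor already at second order, whereas the paper expands $\partial_{klm}\Gamma^l_{ab}$ into fourth derivatives of the metric and reassembles, with the same cancellation appearing in a different guise. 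One slip in your summarizing sentence: after the two $L(\partial_a\Gamma_b)$ contributions cancel, the residual is $\frac{n-2}{2(n-1)}\partial_{abl}\Gamma^l$ (the sum of your own correctly itemized pieces $\frac{1}{2}\partial_{abl}\Gamma^l$ and $-\frac{1}{2(n-1)}\partial_{abl}\Gamma^l$), not $\frac{1}{2(n-1)}\partial_{abl}\Gamma^l$; only the former produces the stated coefficient $\frac{n-2}{n-1}$ after clearing the factor of $2$.
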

\begin{proof}
Assume that the metric satisfies $\abs{g}=1$ in a given system of local coordinates.  Then 
$$
R_{bc} = \partial_a \Gamma_{bc}^a + T_1, \qquad R = \partial_a \Gamma^a + T_1.
$$
The Schouten tensor satisfies 
$$
(n-2) P_{ab} = \partial_l \Gamma_{ab}^l - \frac{1}{2(n-1)} (\partial_l \Gamma^l) g_{ab} + T_1.
$$
Recall that $\nabla_m F_{i_1 \ldots i_k} = (\nabla F)_{i_1 \ldots i_k m} = \partial_m F_{i_1 \ldots i_k} - \sum_{s=1}^k F_{i_1 \ldots p \ldots i_k} \Gamma_{m i_s}^p$. Therefore 
$$
(n-2) \nabla_c P_{ab} = \partial_{cl} \Gamma_{ab}^l - \frac{1}{2(n-1)} (\partial_{cl} \Gamma^l) g_{ab} + T_2
$$
and 
$$
(n-2) \nabla_d \nabla_c P_{ab} = \partial_{cdl} \Gamma_{ab}^l - \frac{1}{2(n-1)} (\partial_{cdl} \Gamma^l) g_{ab} + T_3.
$$
For the Bach tensor, we obtain 
\begin{align*}
 &\frac{n-2}{3-n} B_{ab} = (n-2) \left[ g^{km} \nabla_k \nabla_a P_{bm} - g^{km} \nabla_k \nabla_m P_{ab} \right] + T_2 \\
 &= g^{km} \left[ \partial_{akl} \Gamma_{bm}^l - \partial_{klm} \Gamma_{ab}^l + \frac{1}{2(n-1)} \left[ (\partial_{klm} \Gamma^l) g_{ab} - (\partial_{akl} \Gamma^l) g_{bm} \right] \right] + T_3.
\end{align*}
Next we observe that 
\begin{align*}
\partial_{klm} \Gamma_{ab}^l &= \frac{1}{2} \partial_{klm} \left[ g^{lr}(\partial_a g_{br} + \partial_b g_{ar} - \partial_r g_{ab}) \right] \\
 &= \frac{1}{2} g^{lr} (\partial_{aklm} g_{br} + \partial_{bklm} g_{ar} - \partial_{rklm} g_{ab}) + T_3.
\end{align*}
This implies 
\begin{align*}
 &2g^{km} (\partial_{akl} \Gamma_{bm}^l - \partial_{klm} \Gamma_{ab}^l) \\
 &= g^{km} g^{lr} \left[ \partial_{aklm} g_{br} + \partial_{bkla} g_{mr} - \partial_{rkla} g_{mb} \r.\\
 &\ \ \  \l. -\partial_{aklm} g_{br} - \partial_{bklm} g_{ar} + \partial_{rklm} g_{ab} \right] + T_3 \\
 &= L^2 (g_{ab}) - L(g^{km} \partial_{ka} g_{mb}) - L(g^{lr} \partial_{bl} g_{ar}) + g^{km} g^{lr} \partial_{bkla} g_{mr} + T_3 \\
 &= L^2 (g_{ab}) - L(\partial_a \Gamma_b + \partial_b \Gamma_a) + \partial_{abl} \Gamma^l + T_3.
\end{align*}
In the last step we used that $\Gamma_b = g^{km} \partial_k g_{mb}$ when $\abs{g}=1$, and $\Gamma^l = g^{lr} \Gamma_r$. For the remaining terms in the Bach tensor, we use that 
$$
g^{km} \left[ (\partial_{klm} \Gamma^l) g_{ab} - (\partial_{akl} \Gamma^l) g_{bm} \right] = L(\partial_l \Gamma^l) g_{ab} - \partial_{abl} \Gamma^l + T_3.
$$
Collecting these facts, we have proved the lemma.
\end{proof}

Notice that in any harmonic coordinate system in which $\abs{g}=1$, the Bach tensor would have the form 
$$
2(n-2) B_{ab} = (3-n)L^2(g_{ab}) + T_3.
$$
The operator on the right is obviously elliptic. However, in general it is not clear how to find harmonic coordinates with $\abs{g}=1$ even after conformal scaling, owing to the fact that the equations $\Gamma^k = 0$ for harmonic coordinates are not conformally invariant. This is where $n$-harmonic coordinates become useful: they allow the conformal normalization $\abs{g}=1$, and the Bach tensor turns out to be elliptic in these coordinates.

\begin{lemma} \label{lemma_bach_nharmonic}
The components of the Bach tensor satisfy 
\begin{multline*}
\frac{2 (n-2)}{3-n} B_{ab} = L^2 (g_{ab}) - L (\partial_a \tilde{\Gamma}_b + \partial_b \tilde{\Gamma}_a) \\
+ \frac{n-2}{n-1} \partial_{abl} \tilde{\Gamma}^l + \frac{1}{n-1} L(\partial_l \tilde{\Gamma}^l) g_{ab} + T_3
\end{multline*}
in any $n$-harmonic coordinate system with $\abs{g}=1$, where $\tilde{\Gamma}^k$ is as in \eqref{nharmonic_gauge_condition}:
$$
\tilde{\Gamma}^k = -\frac{n-2}{2} \frac{g^{kr} g^{ka} g^{kb}}{g^{kk}} \partial_r g_{ab}.
$$
\end{lemma}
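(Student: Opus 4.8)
The plan is to deduce the lemma directly from Lemma~\ref{lemma_bach_computation} together with the $n$-harmonic gauge condition \eqref{nharmonic_gauge_condition}. The two displayed formulas are term-by-term identical, the only change being that the geometric contracted Christoffel symbols $\Gamma^l, \Gamma_b$ of Lemma~\ref{lemma_bach_computation} are replaced by the explicit expressions $\tilde{\Gamma}^l, \tilde{\Gamma}_b$. Since Lemma~\ref{lemma_bach_computation} is valid in \emph{any} coordinate system with $\abs{g}=1$, it applies in particular in an $n$-harmonic coordinate system in which $\abs{g}=1$; so the only thing to establish is the substitution $\Gamma \mapsto \tilde{\Gamma}$.

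The key step is to invoke the gauge identity: in an $n$-harmonic coordinate system \eqref{nharmonic_gauge_condition} holds, that is $\Gamma^k = \tilde{\Gamma}^k$ with $\tilde{\Gamma}^k = -\frac{n-2}{2}\frac{g^{kr}g^{ka}g^{kb}}{g^{kk}}\partial_r g_{ab}$. The point I would emphasize is that this is an identity of \emph{functions} on the whole coordinate patch, not merely at a point, because each coordinate is $n$-harmonic throughout the chart. Hence $\Gamma^l$ and $\tilde{\Gamma}^l$ agree as functions, and therefore so do all of their partial derivatives; in particular $\partial_{abl}\Gamma^l = \partial_{abl}\tilde{\Gamma}^l$ and $L(\partial_l\Gamma^l) = L(\partial_l\tilde{\Gamma}^l)$. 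Setting $\tilde{\Gamma}_b := g_{bl}\tilde{\Gamma}^l$, the convention $\Gamma_b = g_{bl}\Gamma^l$ combined with $\Gamma^l = \tilde{\Gamma}^l$ gives $\Gamma_b = \tilde{\Gamma}_b$ as functions, so that likewise $L(\partial_a\Gamma_b + \partial_b\Gamma_a) = L(\partial_a\tilde{\Gamma}_b + \partial_b\tilde{\Gamma}_a)$. Substituting these equalities into the formula of Lemma~\ref{lemma_bach_computation} then replaces each occurrence of $\Gamma$ by $\tilde{\Gamma}$ without altering the value of any term, and in particular without modifying the $T_3$ remainder, which is exactly the asserted identity.

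I do not expect a genuine obstacle here: the entire content of the lemma is carried by the exact functional identity $\Gamma^k = \tilde{\Gamma}^k$ furnished by \eqref{nharmonic_gauge_condition}, and the only matters requiring care are bookkeeping ones, namely confirming the convention $\tilde{\Gamma}_b = g_{bl}\tilde{\Gamma}^l$ and using that the gauge identity holds on an open set so that it may be differentiated the required number of times. The significance of the reformulation lies not in its proof but in what it enables afterward: once written in terms of the explicit $\tilde{\Gamma}^k$, the fourth-order operator on the right-hand side is a concrete expression in $g$ whose linearization and principal symbol can be computed and shown to be elliptic, which is the aim of the subsequent argument.
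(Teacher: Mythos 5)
Your proposal is correct and follows exactly the paper's route: the paper's proof is a one-line deduction from Lemma~\ref{lemma_bach_computation} together with the gauge identity $\Gamma^k = \tilde{\Gamma}^k$ in $n$-harmonic coordinates (citing \cite{LiimatainenSalo}), which is precisely your argument. Your added points --- that the identity holds as functions on the whole chart and may therefore be differentiated, and the convention $\tilde{\Gamma}_b = g_{bl}\tilde{\Gamma}^l$ --- are the correct bookkeeping details the paper leaves implicit.
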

\begin{proof}
Follows from Lemma \ref{lemma_bach_computation} and the fact that $\Gamma^k = \tilde{\Gamma}^k$ in $n$-harmonic coordinates (see \cite[Remark after Theorem 2.1]{LiimatainenSalo}).
\end{proof}

Consider the right hand side operator in Lemma \ref{lemma_bach_nharmonic} acting on symmetric positive definite matrix functions, and denote by $Q = Q_g$ the principal part of the linearization of this operator at $g$. Then $Q$ is the principal part of the following linear operator, acting on symmetric matrix functions $h$ by 
\begin{multline*}
h_{ab} \mapsto L^2 (h_{ab}) - L \left[ \partial_a (g_{bl} \tilde{\Gamma}^l(h)) + \partial_b (g_{al} \tilde{\Gamma}^l(h)) \right] \\
+ \frac{n-2}{n-1} \partial_{abl} \tilde{\Gamma}^l(h) + \frac{1}{n-1} L(\partial_l \tilde{\Gamma}^l(h)) g_{ab}
\end{multline*}
where $L = L_g = g^{ab} \partial_{ab}$, and $\tilde{\Gamma}^k = \tilde{\Gamma}^k_g$ is the operator 
$$
\tilde{\Gamma}^k(h) = -\frac{n-2}{2} \frac{g^{kr} g^{ka} g^{kb}}{g^{kk}} \partial_r h_{ab}.
$$

\begin{lemma}\label{ellipt_calc}
$Q$ is elliptic of order $4$ in dimensions $n \geq 3$.
\end{lemma}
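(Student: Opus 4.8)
The plan is to compute the principal symbol of $Q$ directly and show it is an invertible linear map on symmetric $2$-tensors for every $\xi \neq 0$. Writing $\sigma(Q)(\xi)$ for the symbol, obtained by replacing each $\partial_j$ by $i\xi_j$ in the four terms of the displayed operator, I would first record that the leading term $L^2$ contributes $\ell^2 h_{ab}$, where $\ell = g^{ab}\xi_a\xi_b > 0$ since $g$ is positive definite. The symbol of $\tilde{\Gamma}^l$ is the first-order quantity $\gamma^l(h) = -i\frac{n-2}{2}\frac{\xi^l}{g^{ll}}g^{lp}g^{lq}h_{pq}$ (no sum on $l$), with $\xi^l = g^{lr}\xi_r$. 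Substituting this into the remaining three terms, all of which carry a factor $\xi_a$, $\xi_b$, or $g_{ab}$, I obtain
$$
\sigma(Q)(\xi)h = \ell^2 h_{ab} + \ell(\xi_a V_b + \xi_b V_a) - \frac{n-2}{n-1}\xi_a\xi_b\,\phi - \frac{\ell}{n-1}\phi\,g_{ab},
$$
where $V_b$ and $\phi$ are explicit linear functionals of $h$ built from the $\gamma^l$. The essential structural observation is that the three correction terms take values in the $(n+1)$-dimensional subspace $W$ spanned by the tensors $\xi_a w_b + \xi_b w_a$ (for covectors $w$) and by $g_{ab}$.

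The next step is to reduce injectivity of $\sigma(Q)(\xi)$ to a finite linear system. Since $\ell \neq 0$, any $h$ in the kernel satisfies $\ell^2 h = -(\text{corrections}) \in W$, so necessarily $h_{ab} = \xi_a w_b + \xi_b w_a + \mu\,g_{ab}$ for some covector $w$ and scalar $\mu$. Here $n \geq 3$ is used to guarantee that $g_{ab}$, of rank $n$, does not lie in the subspace $\{\xi_a w_b + \xi_b w_a\}$ of tensors of rank at most $2$, so that $W$ is a genuine direct sum and the splitting of any element of $W$ into its $\xi$-part and its $g$-part is unique. I would then substitute this ansatz back, compute $V_b$ and $\phi$ in terms of $(w,\mu)$, and separate the equation $\sigma(Q)(\xi)h = 0$ into its $g_{ab}$-component and its $\xi_a w_b + \xi_b w_a$-component.

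Solving this system is where the computation pays off. The $g_{ab}$-component pins down $\mu$ and the scalar $\phi$ in terms of a single scalar contraction of $w$; feeding this back, the pure $\xi_a\xi_b$ contributions cancel exactly, and the remaining vector equation collapses to
$$
\ell\, w_b + (n-2)\sum_l g_{bl}\frac{(\xi^l)^2}{g^{ll}}\,w^l = 0, \qquad w^l = g^{lq}w_q.
$$
The key move is to raise the index $b$: contracting with $g^{bc}$ and using $g^{bc}g_{bl} = \delta^c_l$ collapses the sum over $l$ to a single term and diagonalizes the system, giving
$$
\left(\ell + (n-2)\frac{(\xi^c)^2}{g^{cc}}\right)w^c = 0 \quad\text{(no sum on $c$)}
$$
for every $c$. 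Each coefficient is strictly positive, because $\ell > 0$, $n-2 > 0$ for $n \geq 3$, and $g^{cc} > 0$ by positive definiteness of $g$; hence $w = 0$, whence $\mu = 0$ and $h = 0$. This shows $\sigma(Q)(\xi)$ is injective, hence invertible, for all $\xi \neq 0$.

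The main obstacle, and the reason the argument cannot be shortened by a convenient normalization, is the non-tensorial factor $g^{cc}$ (no summation) appearing in $\tilde{\Gamma}^l$: it is not invariant under general linear changes of coordinates, and $n$-harmonic coordinates admit only permutations and rescalings of the axes, never arbitrary linear transformations. Thus one cannot reduce to the case $g_{ab}(p) = \delta_{ab}$, and the symbol must be analyzed for an arbitrary positive-definite $g(p)$. What makes the estimate close nonetheless is precisely this positive definiteness: it forces $\ell > 0$ and $g^{cc} > 0$, which are exactly the inequalities that keep the diagonalized coefficients away from zero.
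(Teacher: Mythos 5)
Your proof is correct, and it takes a genuinely different route from the paper's. The paper never introduces an ansatz for the kernel: it raises both indices of the symbol, sets $a=b$, and observes that the diagonal components factor as
\[
(q(\xi) h)^{aa} = \left( g^{aa} \abs{\xi}^2 + (n-2) (\xi^a)^2 \right) \left[ \abs{\xi}^2 \frac{h^{aa}}{g^{aa}} - \frac{n-2}{2(n-1)} \sum_{l=1}^n \xi_l \xi^l \frac{h^{ll}}{g^{ll}} \right],
\]
so that $q(\xi)h=0$ forces $h^{aa}/g^{aa} = \lambda$ independent of $a$, then $\bigl(1-\frac{n-2}{2(n-1)}\bigr)\lambda = 0$ gives $h^{aa}=0$ for all $a$, hence the gauge terms $\sigma(-i\tilde{\Gamma}^a)(h)$ vanish and $\abs{\xi}^4 h = 0$. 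You instead confine the kernel a priori to the $(n+1)$-dimensional space $W = \{\xi_a w_b + \xi_b w_a\} \oplus \mR\, g_{ab}$ --- a DeTurck-style observation that the gauge-breaking terms take values exactly in the infinitesimal diffeomorphism and conformal directions --- and then solve the resulting $(w,\mu)$-system explicitly. I checked your two computational claims: the $\xi_b$-terms do cancel exactly once the $g_{ab}$-component relation is fed back in, and the contraction with $g^{bc}$ does collapse the sum to the diagonal system $\bigl(\ell + (n-2)(\xi^c)^2/g^{cc}\bigr)w^c = 0$; your terse ``whence $\mu = 0$'' also follows, since the $g_{ab}$-component expresses $\ell\mu$ as a fixed multiple of the scalar $\sum_l \xi_l (\xi^l)^2 w^l / g^{ll}$, which vanishes with $w$. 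In fact the two arguments hinge on the same positivity: your diagonal coefficient is the paper's prefactor $g^{aa}\abs{\xi}^2 + (n-2)(\xi^a)^2$ divided by $g^{aa}$. What each buys: the paper's contraction trick is shorter and needs no uniqueness-of-splitting discussion (your rank argument for $W$ being a direct sum, valid for $n \geq 3$, is where you pay for the ansatz), whereas your route explains structurally why precisely these two gauge conditions ($n$-harmonic plus $\abs{g}=1$) suffice, since the corrections cannot reach outside $W$. Your closing remark is also accurate and worth keeping: because $\tilde{\Gamma}^l$ involves the non-tensorial diagonal quantities $g^{kk}$ (no summation), one cannot normalize $g(p) = \delta_{ab}$ by a linear change of coordinates, which is why both proofs must handle an arbitrary positive definite $g$.
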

\begin{proof}
The principal symbol of $Q$, acting on a symmetric matrix function $h$, is given by 
\begin{multline*}
(q(\xi) h)_{ab} = \abs{\xi}^4 h_{ab} - \abs{\xi}^2 \left[ \xi_a g_{bl} + \xi_b g_{al} \right] \sigma(-i\tilde{\Gamma}^l)(h) \\
 + \frac{n-2}{n-1} \xi_a \xi_b \xi_l \sigma(-i\tilde{\Gamma}^l)(h)+ \frac{1}{n-1} \abs{\xi}^2 \xi_l \sigma(-i\tilde{\Gamma}^l)(h) g_{ab}
\end{multline*}
where 
$$
\sigma(-i\tilde{\Gamma}^k)(h) = -\frac{n-2}{2} \xi^k \frac{h^{kk}}{g^{kk}}.
$$
Here, the norms $\abs{\,\cdot\,}$ and the raising and lowering of indices are taken with respect to the metric $g$. We also have 
\begin{multline*}
(q(\xi) h)^{ab} = \abs{\xi}^4 h^{ab} - \abs{\xi}^2 \left[ \xi^a \sigma(-i\tilde{\Gamma}^b)(h) + \xi^b \sigma(-i\tilde{\Gamma}^a)(h) \right]  \\
 + \frac{n-2}{n-1} \xi^a \xi^b \xi_l \sigma(-i\tilde{\Gamma}^l)(h)+ \frac{1}{n-1} \abs{\xi}^2 \xi_l \sigma(-i\tilde{\Gamma}^l)(h) g^{ab}.
\end{multline*}
Taking $a=b$, this gives 
\begin{multline*}
(q(\xi) h)^{aa} = \abs{\xi}^4 h^{aa} - 2 \abs{\xi}^2 \xi^a \sigma(-i\tilde{\Gamma}^a)(h)   \\
 + \frac{n-2}{n-1} (\xi^a)^2 \xi_l \sigma(-i\tilde{\Gamma}^l)(h)+ \frac{1}{n-1} \abs{\xi}^2 \xi_l \sigma(-i\tilde{\Gamma}^l)(h) g^{aa} \\
 = \left( g^{aa} \abs{\xi}^2 + (n-2) (\xi^a)^2 \right) \left[ \abs{\xi}^2 \frac{h^{aa}}{g^{aa}} - \frac{n-2}{2(n-1)} \sum_{l=1}^n \xi_l \xi^l \frac{h^{ll}}{g^{ll}} \right].
\end{multline*}
If $\xi \neq 0$ and if $q(\xi)h = 0$, then also 
$$
\frac{h^{aa}}{g^{aa}} - \frac{n-2}{2(n-1)} \sum_{l=1}^n \frac{\xi_l \xi^l}{\abs{\xi}^2} \frac{h^{ll}}{g^{ll}} = 0
$$
for all $a = 1, \ldots, n$. This implies that 
$$
\frac{h^{11}}{g^{11}} = \ldots = \frac{h^{nn}}{g^{nn}} = \lambda
$$
for some function $\lambda$, and therefore  
$$
\left( 1 - \frac{n-2}{2(n-1)} \right) \lambda = 0.
$$
Thus $h^{aa} = 0$ for all $a$, so $\sigma(-i\tilde{\Gamma}^a)(h) = 0$ for all $a$ and $(q(\xi) h)^{ab} = \abs{\xi}^4 h^{ab} = 0$ for all $a, b$. It follows that $h = 0$, showing that $Q$ is elliptic.
\end{proof}

We are now ready to prove the main ellipticity result.

\begin{proof}[Proof of Theorem \ref{thm_main_ellipticity}]
Let $(M,g)$ be a smooth Riemannian manifold, fix an $n$-harmonic coordinate system, and let $\hat{g}_{jk} = \abs{g}^{-1/n} g_{jk}$ in these coordinates. Using the conformal invariance of the $n$-harmonic equation, our coordinate system is $n$-harmonic also with respect to the conformal metric $\hat{g}$. Let $\hat{B}_{ab}$ be the Bach tensor for $\hat{g}$ in $n$-harmonic coordinates after applying the $n$-harmonic gauge condition \eqref{nharmonic_gauge_condition} and the condition $\abs{\hat{g}} = 1$. It follows from Lemmas \ref{lemma_bach_computation}--\ref{ellipt_calc} that the principal part of the linearization of $\hat{B}_{ab}$ is elliptic in dimensions $n \geq 4$.

Moving on to the Weyl tensor, the formula $B_{ab}=\nabla^k\nabla^l W_{akbl}+\frac{1}{2}R^{kl}W_{akbl}$ implies that 
$$
\xi^k \xi^l \sigma(\hat{W}_{akbl})(\xi)h = -\sigma(\hat{B}_{ab})(\xi)h
$$
where $\hat{W}$ is the Weyl tensor of $\hat{g}$ after applying the gauge conditions, and $\sigma$ denotes the principal symbol of the linearization applied to a matrix function $h$. Thus $\sigma(\hat{W}_{akbl})(\xi)h = 0$ implies $h = 0$ by the ellipticity of $\hat{B}_{ab}$, showing that $\hat{W}_{abcd}$ is overdetermined elliptic (its principal symbol is injective). The ellipticity of Cotton and obstruction tensors follow in a similar way from the identities $(3-n) C_{abc} = \nabla^l W_{abcl}$, $(n-3) \mathcal{O}_{ab}=\Delta^{n/2-2}\nabla^k\nabla^lW_{akbl}+T_{n-1}$.
\end{proof}

\section{Elliptic regularity results}


We now consider low regularity Riemannian metrics, and establish elliptic regularity results for conformal curvature tensors. The Weyl tensor will be considered in detail and the arguments for the other tensors will be sketched. We first assume that 
$$
g_{jk} \in W^{1,2} \cap L^{\infty}
$$
in some system of local coordinates near a point. (In this section all function spaces are assumed to be of the local variety near a point, that is, we write $W^{1,2}$ instead of $W^{1,2}_{\mathrm{loc}}(U)$ etc.) This seems to be a minimal assumption for defining the Weyl tensor: the set $W^{1,2} \cap L^{\infty}$ is an algebra under pointwise multiplication \cite{KatoPonce}, \cite{BadrBernicotRuss}, and therefore $g^{jk}, \abs{g} \in W^{1,2} \cap L^{\infty}$.  We see that 
\begin{gather*}
\Gamma_{ab}^c \in L^2, \\
R_{abc}^{\phantom{abc}d} = \partial_a \Gamma_{bc}^d - \partial_b \Gamma_{ac}^d + \Gamma_{bc}^m \Gamma_{am}^d - \Gamma_{ac}^m \Gamma_{bm}^d \in W^{-1,2} + L^1, \\
R_{bc} = R_{abc}^{\phantom{abc}a} \in W^{-1,2} + L^1.
\end{gather*}
Since 
$$
P_{ab} = \frac{1}{n-2} \left[ R_{ab} - \frac{1}{2(n-1)} g^{rs} R_{rs} g_{ab} \right],
$$
we have 
\begin{multline*}
W_{abc}^{\phantom{abc}d} = R_{abc}^{\phantom{abc}d} + P_{ac} \delta_b^{\phantom{b}d} - P_{bc} \delta_a^{\phantom{a}d} + P_{bm} g^{md} g_{ac} - P_{am} g^{md} g_{bc} \\
 = R_{abc}^{\phantom{abc}d} + \frac{1}{n-2} \left[ R_{ac} \delta_b^{\phantom{b}d} - R_{bc} \delta_a^{\phantom{a}d} + R_{bm} g^{md} g_{ac} - R_{am} g^{md} g_{bc} \right] \\
  + \frac{1}{(n-2)(n-1)} \left[ g^{rs} g_{bc} R_{rs} \delta_a^{\phantom{a}d} - g^{rs} g_{ac} R_{rs} \delta_b^{\phantom{b}d} \right].
\end{multline*}
Now, since multiplication by an $W^{1,2} \cap L^{\infty}$ function maps $W^{1,q}$ to $W^{1,2}$ for any $q > n$, it also maps $W^{-1,2}$ to $W^{-1,q'}$, and we have 
\begin{gather*}
g^{ij} g_{kl} R_{rs} \in W^{-1,q'} + L^1.
\end{gather*}
This shows that if $g_{jk} \in W^{1,2} \cap L^{\infty}$ in some system of local coordinates, then one can make sense of the components $W_{abc}^{\phantom{abc}d}$ of the Weyl tensor as elements in $W^{-1,q'} + L^1$ in these coordinates for any $q > n$. It is easy to see that also $R_{abcd}, W_{abcd} \in W^{-1,q'} + L^1$ in this case, and the identity $W(cg) = cW(g)$ remains true if $c, g_{jk} \in W^{1,2} \cap L^{\infty}$.

By similar arguments, if $g_{jk} \in W^{2,2} \cap W^{1,\infty}$ in some local coordinates, the components $C_{abc}$ of the Cotton tensor may be interpreted as elements of $W^{-1,2}$ and one has $C(cg) = C(g)$ if $c \in W^{2,2} \cap W^{1,\infty}$ and $n=3$. Moreover, if $g_{jk} \in W^{n-1,2} \cap W^{n-2,\infty}$, then the Fefferman-Graham obstruction tensor satisfies $\mathcal{O}_{ab} \in W^{-1,2}$ and $\mathcal{O}(cg) = c^{\frac{n-2}{2}} \mathcal{O}(g)$ for $n \geq 4$ even. We omit the details of these computations.

We will need the following elliptic regularity result, which is, after using suitable cutoffs and extensions, a consequence of \cite[Theorem 14.4.2]{T3} (the argument also applies to overdetermined elliptic operators). For completeness, the details are given in Appendix \ref{app}.

\begin{prop} \label{elliptic_regularity_differential_operator}
Let $B \subset \mR^n$ be a ball, and let $p(x,D) = \sum_{\abs{\alpha} \leq m} p_{\alpha}(x) D^{\alpha}$ be an $M \times N$ matrix differential operator with coefficients in $C^r_*(B)$ where $r > 0$. Assume that $p(x,D)$ is overdetermined elliptic in the sense that its principal symbol $p_m(x,\xi)$ is injective for $x \in B$ and $\xi \neq 0$. If in the sense of distributions 
$$
p(x,D) u = f \quad \text{in } B
$$
where $u \in C^{m-r+\eps}_*(B)$ for some $\eps > 0$ and $f \in C^s_*(B)$ with $-r < s < r$, then $u \in C^{m+s}_*(B)$.
\end{prop}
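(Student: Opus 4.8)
The plan is to reduce the statement to the determined elliptic case treated in \cite[Theorem 14.4.2]{T3} by standard localization, and to account for the overdetermined (rectangular) structure by replacing the two-sided parametrix of the determined case with a \emph{left} parametrix built from the injective principal symbol. First I would fix a slightly smaller ball, choose a cutoff $\chi$ supported in $B$, and work with $\chi u$; since $p(x,D)$ is a differential operator, $p(x,D)(\chi u) = \chi f + [\,p(x,D),\chi\,]u$, where the commutator is a differential operator of order $m-1$ applied to $u$ and hence contributes only lower-order terms. After extending the coefficients to all of $\mR^n$ with control of their $C^r_*$ norms, it suffices to prove a global estimate for compactly supported $u$.

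The heart of the matter is the construction of a left parametrix adapted to rough coefficients. Since $p_m(x,\xi)$ is injective for $\xi \neq 0$, the $N \times N$ matrix $p_m(x,\xi)^* p_m(x,\xi)$ is invertible and homogeneous of degree $2m$ in $\xi$, so I would set
$$
e(x,\xi) = \bigl( p_m(x,\xi)^* p_m(x,\xi) \bigr)^{-1} p_m(x,\xi)^*,
$$
an $N \times M$ symbol homogeneous of degree $-m$ satisfying $e(x,\xi) p_m(x,\xi) = I_N$. Following Taylor's symbol smoothing, I would decompose the coefficient-dependent symbol $p(x,\xi) \in C^r_* S^m$ as $p = p^\# + p^b$, where $p^\# \in S^m_{1,\delta}$ is a genuine (smooth-in-$x$) symbol and $p^b \in C^r_* S^{m-r\delta}_{1,\delta}$ has strictly lower order, for a fixed $\delta \in (0,1)$. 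The smooth part $p^\#(x,D)$ then admits a left parametrix $E \in \mathrm{OP}S^{-m}_{1,\delta}$ with $E\, p^\#(x,D) = I + S$, where $S$ is smoothing; the symbol of $E$ is obtained by the usual symbolic iteration starting from $e(x,\xi)$.

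Applying $E$ to the equation and writing $p^\#(x,D)u = f - p^b(x,D)u$ yields the key identity
$$
u = E f - E\, p^b(x,D) u + S u,
$$
and the regularity of $u$ is improved by tracking each term through the mapping properties on Zygmund spaces. Here $E \colon C^s_* \to C^{s+m}_*$ places $Ef$ in the target space $C^{m+s}_*$, the term $Su$ is harmless, and the crucial term $E\, p^b(x,D)u$ gains $r\delta$ derivatives relative to $u$ (since $p^b$ has order $m-r\delta$ and $E$ has order $-m$). Starting from the a priori assumption $u \in C^{m-r+\eps}_*$ — which is exactly the threshold $\sigma > m-r$ guaranteeing that the rough operator $p^b(x,D)$ acts continuously on $C^\sigma_*$ — I would bootstrap: each application of the identity upgrades $u \in C^\sigma_*$ to $u \in C^{\min(\sigma + r\delta,\, m+s)}_*$, reaching $u \in C^{m+s}_*$ after finitely many steps. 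The restriction $-r < s < r$ is precisely the band of regularities in which the symbol-smoothing and paraproduct remainder estimates for $C^r_*$ coefficients are valid, so no term leaves its range during the iteration.

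I expect the main obstacle to be the bookkeeping that keeps every operator within its range of validity throughout the bootstrap, rather than any single estimate. One must verify that the left parametrix $E$ of the smooth part $p^\#$ can indeed be manufactured from the injective principal symbol $p_m$ — this is where overdeterminacy enters and where the argument departs from the determined case of \cite{T3} — and that the lower-order gain $r\delta$ from $p^b$, combined with the a priori regularity $m-r+\eps$, stays strictly inside the interval $(-r,r)$ at each stage, so that the Zygmund-space continuity of the rough and smooth pseudodifferential operators applies at every step. Once these points are secured, the iteration is routine and terminates at the asserted regularity $u \in C^{m+s}_*$.
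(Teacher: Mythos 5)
Your proposal is correct and follows essentially the same route as the paper's Appendix A: localize with a cutoff, extend the coefficients globally (the paper does this via a Kelvin transform plus mollification so that the ellipticity bound holds uniformly on $\mR^n$), perform Taylor's symbol smoothing $p = p^\sharp + p^\flat$, build a left parametrix from the injective symbol via the $(A^tA)^{-1}A^t$ trick, and bootstrap through Zygmund spaces exactly as in \cite[Theorem 14.4.2]{T3}. The one verification you flag but do not carry out --- that the smooth part $p^\sharp$ (not the rough $p_m$, from which one cannot directly seed a smooth symbolic iteration) remains uniformly injective for large $\abs{\xi}$ --- is precisely the content of the paper's Lemma \ref{symbol_smoothing}, which then obtains $E = Q\,(p^\sharp)^t(x,D)$ from a parametrix $Q$ of the determined elliptic operator $(p^\sharp)^t(x,D)\,p^\sharp(x,D)$.
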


The next result gives a symbol computation for the Weyl tensor with low regularity metric.

\begin{lemma} \label{weyl_princip}
Let $g_{jk} \in C^r_*$, $r > 1$, in some system of local coordinates, and assume that $\abs{g}=1$. The principal symbol $\sigma(W_{abcd})$ of the linearization of the Weyl tensor at $g$ satisfies 
 \begin{multline}\label{weyl_contr}
  \xi^a \xi^d \sigma(W_{abcd})h=\frac{n-3}{2(n-2)} \Big[ |\xi|^4 h_{bc} - |\xi|^2 \left( \xi_b\sigma(-i\Gamma_c)h + \xi_c\sigma(-i\Gamma_b)h \right)  \\
  +\frac{n-2}{n-1} \xi_b \xi_c \xi_l \sigma(-i\Gamma^l)h + \frac{1}{n-1} |\xi|^2 \left( \xi_l \sigma(-i\Gamma^l)h \right) g_{bc} \Big].
\end{multline}
\end{lemma}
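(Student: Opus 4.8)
The plan is to exploit the fact that the right-hand side of \eqref{weyl_contr} is, up to an overall constant, precisely the principal symbol of the Bach tensor already computed in Lemma~\ref{lemma_bach_computation}. Indeed, combining the two constants gives $\frac{n-3}{2(n-2)}\cdot\frac{2(n-2)}{3-n}=-1$, so the bracketed expression in \eqref{weyl_contr} is exactly $-\sigma(B_{bc})(\xi)h$; in other words the lemma is asserting the symbol identity $\xi^a\xi^d\sigma(W_{abcd})h=-\sigma(B_{bc})(\xi)h$. Rather than linearizing the Weyl tensor from scratch I would therefore relate the contraction $\xi^a\xi^d\sigma(W_{abcd})$ to $\sigma(B_{bc})$ and then quote Lemma~\ref{lemma_bach_computation}. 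One point deserves care at the outset: for a metric of regularity $C^r_*$ with $r>1$ the Bach tensor itself is not even defined, since forming $\nabla^k\nabla^l W_{akbl}$ costs two more derivatives than the Weyl tensor can afford. However, the principal symbol $\sigma(W_{abcd})(\xi)$ is a universal expression, polynomial in $\xi$ and in the components of $g$ and $g^{-1}$ (the top-order part of the linearized curvature formula), and is therefore well defined at this regularity and determined by its value for smooth metrics. Consequently it suffices to establish \eqref{weyl_contr} assuming $g$ smooth with $\abs{g}=1$, where the Bach identity and Lemma~\ref{lemma_bach_computation} are available; the resulting algebraic identity in $(g,\xi)$ then persists for $g\in C^r_*$.

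Granting this reduction, I would proceed in three steps. First, from $B_{ab}=\nabla^k\nabla^l W_{akbl}+\tfrac12 R^{kl}W_{akbl}$ and the fact that the symbol of $\nabla^k\nabla^l$ is $-\xi^k\xi^l$ (the curvature-times-$W$ term being of lower order), one obtains the symbol relation $\xi^k\xi^l\sigma(W_{akbl})(\xi)h=-\sigma(B_{ab})(\xi)h$ already used in the proof of Theorem~\ref{thm_main_ellipticity}. Second, I would convert the contraction in \eqref{weyl_contr}, which pairs the first and fourth indices, into this Bach-type contraction, which pairs the second and fourth indices, using the antisymmetry $W_{abcd}=-W_{bacd}$: after renaming dummy indices, $\xi^a\xi^d\sigma(W_{abcd})h=-\xi^k\xi^l\sigma(W_{bkcl})h$, so that $\xi^a\xi^d\sigma(W_{abcd})h$ equals $\pm\sigma(B_{bc})(\xi)h$ with free indices $b,c$. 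Third, I would read off the symbol of $B_{bc}$ from Lemma~\ref{lemma_bach_computation} by the substitution $\partial_j\mapsto i\xi_j$, so that $L\mapsto-\abs{\xi}^2$ and $L^2\mapsto\abs{\xi}^4$, and rewrite each $\Gamma$-contribution in the normalized notation $\sigma(-i\Gamma_c)h$ and $\sigma(-i\Gamma^l)h$; collecting the four terms reproduces the bracket in \eqref{weyl_contr}.

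The main obstacle is purely one of bookkeeping the factors of $i$ and the various signs, and getting them to conspire into the stated constant $\frac{n-3}{2(n-2)}$. Each naked derivative sitting in front of a $\Gamma$ (as in $\partial_a\Gamma_b$, $\partial_{abl}\Gamma^l$, or $L(\partial_l\Gamma^l)$) contributes an extra factor $i\xi$, and these must be combined with the $-i$ absorbed into the normalized symbols $\sigma(-i\Gamma_\cdot)$ so that the final coefficients are real and match; simultaneously one must pin down the sign coming from the Weyl antisymmetry in the second step and from the factor $\tfrac{2(n-2)}{3-n}$ in Lemma~\ref{lemma_bach_computation}. This is the delicate point, and it is where I would spend the most care.

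If one prefers to stay strictly within the regularity of the Weyl tensor and avoid invoking the (undefined) Bach tensor even as an intermediary, the same formula can be obtained by a direct, if longer, linearization: expand $W_{abcd}=R_{abcd}+P_{ac}g_{bd}-P_{bc}g_{ad}+P_{bd}g_{ac}-P_{ad}g_{bc}$, insert the principal symbols of $R_{abcd}$ and of $(n-2)P_{ab}=\partial_l\Gamma_{ab}^l-\tfrac{1}{2(n-1)}(\partial_l\Gamma^l)g_{ab}+T_1$ from Section~\ref{section_symbolcomputations}, and contract with $\xi^a\xi^d$, using the antisymmetries of $\sigma(R_{abcd})$ to discard the non-principal pieces. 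I would keep the Bach route as the primary argument and record this direct computation as the self-contained alternative.
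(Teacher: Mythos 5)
Your reduction of the low-regularity issue is legitimate and is genuinely different from the paper's device. Since the top-order part of $W(g)$ has coefficients depending on $g$ only pointwise and algebraically, both sides of \eqref{weyl_contr} are polynomial in $(\xi,h)$ with coefficients rational in the matrix $g(x)$, and every admissible pointwise value with $\abs{g}=1$ is realized by a constant (hence smooth) metric; so it does suffice to prove the formula for smooth $g$. The paper instead never leaves the $C^r_*$ setting: it proves the symbol-level surrogates \eqref{Bianchi_emulated1}--\eqref{Bianchi_emulated2} of the contracted Bianchi identities, rewrites $W_{abcd}$ in a form adapted to them, contracts with $\xi^a\xi^d$ to reach \eqref{Wc}, and then substitutes the symbols of $R_{bc}$ and $R$; your ``direct linearization'' fallback is essentially this argument. (One point any route must respect: \eqref{weyl_contr} is the linearization within the slice $\abs{g}=1$, i.e.\ for trace-free $h$; for unconstrained $h$ the substitution $\sigma(R)h=-\xi_l\sigma(-i\Gamma^l)h$ used at the last step is false, the two sides differing by $\tfrac12\abs{\xi}^2\,g^{ab}h_{ab}$.)

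The genuine gap is the sign you leave as ``$\pm$'': it is not routine bookkeeping but the only nontrivial content of the lemma beyond Lemma~\ref{lemma_bach_computation}, and your primary route, executed faithfully, lands on the wrong sign. Your antisymmetry step is correct, $\xi^a\xi^d\sigma(W_{abcd})h=-\xi^k\xi^l\sigma(W_{bkcl})h$, and the displayed identity $B_{ab}=\nabla^k\nabla^lW_{akbl}+\tfrac12R^{kl}W_{akbl}$ gives $\xi^k\xi^l\sigma(W_{bkcl})h=-\sigma(B_{bc})h$, hence $\xi^a\xi^d\sigma(W_{abcd})h=+\sigma(B_{bc})h$; inserting $\sigma(B_{bc})h=\tfrac{3-n}{2(n-2)}[\cdots]$ from Lemma~\ref{lemma_bach_computation} then yields $-\tfrac{n-3}{2(n-2)}[\cdots]$, the negative of \eqref{weyl_contr}. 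A direct check confirms that the lemma's sign is the correct one: for $g=\delta$, $\xi=e_1$, $h=e_2\otimes e_2-e_3\otimes e_3$ one computes $\xi^a\xi^d\sigma(W_{a22d})h=\sigma(W_{1221})h=\tfrac12-\tfrac{1}{2(n-2)}=\tfrac{n-3}{2(n-2)}$, matching \eqref{weyl_contr}, while $\sigma(B_{22})h=\tfrac{3-n}{2(n-2)}$ by Lemma~\ref{lemma_bach_computation}. The explanation is that the quoted $W$-based Bach identity is convention-sensitive: with the paper's conventions ($R_{bc}=R_{abc}{}^{a}$ together with the coordinate formula for $R_{abcd}$ of Section~\ref{section_symbolcomputations}), the version consistent with Lemma~\ref{lemma_bach_computation} is $B_{ab}=\nabla^k\nabla^lW_{kabl}+\tfrac12R^{kl}W_{kabl}$, which differs from the $W_{akbl}$ contraction by exactly the antisymmetry sign you invoke --- and deciding between the two index orders is itself a second-Bianchi computation, precisely the step your plan outsources. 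So as written the primary route terminates either in the wrong sign or in an undetermined one; to complete the proof you must either rederive the $W$-based identity in the stated conventions, or promote your direct-linearization fallback --- which is in substance the paper's actual proof via the emulated Bianchi identities --- to the main argument.
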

\begin{proof}
If $g$ were sufficiently smooth, for instance $C^4$, we could readily use the formula $B_{ab}=\nabla^k\nabla^lW_{akbl}+\frac{1}{2}R^{kl}W_{akbl}$ to have
$$
\xi^a \xi^d \sigma(W_{abcd})h=-\sigma(B_{bc})h,
$$
which together with Lemma~\ref{lemma_bach_computation} would yield the claim. For $C^r_*$ metrics, $r > 1$, we have to be more careful because the derivation of the different formulas for the Bach tensor uses the Bianchi identities
$$
\nabla^d R_{abcd} = \nabla_aR_{bc}-\nabla_bR_{ac}, \qquad \nabla^b R_{ab} = \frac{1}{2}\nabla_b R,
$$
that require more derivatives. We can however emulate the Bianchi identities on the symbol level using the equations
\begin{align}
 \xi^d &\sigma(R_{abcd}+R_{ac}g_{bd}-R_{bc}g_{ad})=0, \label{Bianchi_emulated1} \\
 \xi^a &\sigma(R_{ab}g_{cd}-\frac{1}{2}Rg_{ab}g_{cd})=0. \label{Bianchi_emulated2}
\end{align}
By the discussion earlier in this section, one can check that these identities are valid for $C^r_*$ metrics when $r > 1$. We now write the Weyl tensor as
\begin{multline*}
W_{abcd}=(R_{abcd}+R_{ac}g_{bd}-R_{bc}g_{ad})+\l(\frac{1}{n-2}-1\r)(R_{ac}g_{bd}-R_{bc}g_{ad}) \\
  +\frac{1}{n-2}(R_{bd}g_{ac} -R_{ad}g_{bc})-\frac{R}{(n-1)(n-2)}(g_{ac}g_{bd}-g_{bc}g_{ad}) \\
  =(R_{abcd}+R_{ac}g_{bd}-R_{bc}g_{ad})-\frac{n-3}{n-2}(R_{ac}g_{bd}-\frac{1}{2}Rg_{ac}g_{bd}) \\
  +\frac{1}{n-2}(R_{bd}g_{ac} -\frac{1}{2}Rg_{bd}g_{ac}-R_{ad}g_{bc}+\frac{1}{2}Rg_{ad}g_{bc}) \\
  +\frac{n-3}{n-2}R_{bc}g_{ad}-\frac{n-3}{2(n-2)}Rg_{ac}g_{bd} + \frac{1}{2(n-2)}(Rg_{bd}g_{ac}-Rg_{ad}g_{bc}). \\
  -\frac{R}{(n-1)(n-2)}(g_{ac}g_{bd}-g_{bc}g_{ad})
\end{multline*}
Now, taking the symbol of the Weyl tensor in this form and contracting by $\xi^a\xi^d$ we have by the identities~\eqref{Bianchi_emulated1}, \eqref{Bianchi_emulated2} that
\begin{multline}\label{Wc}
 \xi^a \xi^d \sigma(W_{abcd})h=-\frac{n-3}{2(n-2)}\Big[-2\abs{\xi}^2\sigma(R_{bc})h+\frac{n-2}{n-1}\xi_b\xi_c \sigma(R)h \\
 +\frac{1}{n-1}\abs{\xi}^2g_{bc}\sigma(R) \Big].
\end{multline}
We have
$$
R_{ab} = -\frac{1}{2} \Delta g_{ab} + \frac{1}{2} (\partial_a \Gamma_b + \partial_b \Gamma_a) +T_1
$$
and consequently, for a metric $g$ with $\abs{g}=1$, it holds that 
$$
R=\partial_a\Gamma^a +T_1.
$$
Substituting these formulas to~\eqref{Wc} yields the claim.
\end{proof}

\begin{proof}[Proof of Theorem \ref{thm_main_regularity}]
Let $g_{jk} \in C^r_*$, $r > 1$, in some system of local coordinates. By \cite[Theorem 2.1]{LiimatainenSalo} we know that $n$-harmonic coordinate systems exist near any point, and by \cite[Proposition 2.5]{LiimatainenSalo} the metric satisfies $g_{jk} \in C^r_*$ in $n$-harmonic coordinates.

Consider first the Weyl tensor. Let $n \geq 4$, $r > 1$, and $W_{abc}^{\phantom{abc}d} \in C^s_*$ for some $s > r-2$ in $n$-harmonic coordinates. Write $\hat{g} = \abs{g}^{-1/n} g_{jk}$ in these coordinates, so that 
$$
\hat{g}_{jk} \in C^r_*, \qquad \abs{\hat{g}} = 1.
$$
By conformal invariance we have $W_{abc}^{\phantom{abc}d}(\hat{g}) = W_{abc}^{\phantom{abc}d}(g)$  in $n$-harmonic coordinates where the right hand side is in $C^s_*$. This is a second order system for $\hat{g}$ where the principal symbol of the linearization is injective after applying the $n$-harmonic gauge condition \eqref{nharmonic_gauge_condition} and the condition $\abs{\hat{g}}=1$, by Lemma \ref{weyl_princip} and the algebraic computation in Lemma \ref{ellipt_calc} (note that ellipticity of $W_{abc}^{\phantom{abc}d}$ is equivalent with that of $W_{abcd}$). Hence Proposition \ref{elliptic_regularity_differential_operator} applies. If $s < r$ we obtain $\hat{g}_{jk} \in C^{s+2}_*$ as required. If $s \geq r$ we initially only get $\hat{g}_{jk} \in C^{\tilde{r}+2}_*$ for any $\tilde{r} < r$, but iterating the argument finitely many times yields $\hat{g}_{jk} \in C^{s+2}_*$ and we are done.

The proof for the Cotton and Fefferman-Graham obstruction tensors follows in a similar way from the above arguments by using the identities $(3-n) C_{abc} = \nabla^l W_{abcl}$ and $(n-3) \mathcal{O}_{ab}=\Delta^{n/2-2}\nabla^k\nabla^lW_{akbl}+T_{n-1}$.
\end{proof}

\begin{proof}[Proof of Theorem \ref{thm_main_conformalflatness}]
Assume first that $g_{jk} \in C^r_*$ where $n \geq 4$, $r > 1$, and the Weyl tensor vanishes near some point of $M$. By Theorem \ref{thm_main_regularity} we know that the metric $\hat{g}_{jk} = \abs{g}^{-1/n} g_{jk}$ is $C^{\infty}$ in any system of $n$-harmonic coordinates. Since the Weyl tensor of $\hat{g}$ vanishes, classical arguments (see for example~\cite[Chapter 4]{Aubin}) show that there is a possibly different set of coordinates $\{y^a\}$ (for which we use indices $a$, $b$) where $\hat{g}_{ab} = \hat{c} \delta_{ab}$ for some positive $C^{\infty}$ function $\hat{c}$. Then $g_{ab} = c \delta_{ab}$ where $c = \abs{g}^{1/n} \hat{c}$ is in $C^r_*$. The coordinates $\{y^{a}\}$ are again $n$-harmonic~\cite[Proposition 2.6]{LiimatainenSalo}. The argument for Cotton and Fefferman-Graham tensors is analogous.
\end{proof}

\appendix

\section{Elliptic regularity theorem} \label{app}

We give an elliptic regularity theorem for overdetermined elliptic systems. The result is essentially an integration of the techniques in~\cite[Chapter 3.2]{ToolsPDE} and Theorem 14.4.2 of~\cite{T3}. We adopt the notation of these references.

\begin{prop}\label{elliptic_regularity_differential_new}
Let $B \subset \mR^n$ be a ball, and let $p(x,D) = \sum_{\abs{\alpha} \leq m} p_{\alpha}(x) D^{\alpha}$ be an $M \times N$ matrix differential operator with coefficients in $C^r_*(B)$ where $r > 0$. Assume that $p(x,D)$ is overdetermined elliptic in the sense that its principal symbol $p_m(x,\xi)$ is injective for $x \in B$ and $\xi \neq 0$. If in the sense of distributions 
$$
p(x,D) u = f \quad \text{in } B
$$
where $u \in C^{m-r+\eps}_*(B)$ for some $\eps > 0$ and $f \in C^{\mu}_*(B)$ with $-r < \mu < r$, then $u \in C^{m+\mu}_*(B)$.
\end{prop}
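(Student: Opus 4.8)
The plan is to prove this by paradifferential calculus and symbol smoothing, integrating \cite[Chapter 3.2]{ToolsPDE} with \cite[Theorem 14.4.2]{T3}; the only genuinely new ingredient relative to the standard (determined) elliptic case is the use of the injectivity of the principal symbol to manufacture a \emph{left} parametrix. First I would localize, multiplying by a cutoff supported in $B$ and extending each $p_\alpha$ to a $C^r_*(\mR^n)$ function, so that $u$ may be taken compactly supported and the $\mR^n$ paradifferential calculus applies. Before anything else one checks that the products $p_\alpha(x) D^\alpha u$ are well defined: since $u \in C^{m-r+\eps}_*$, each $D^\alpha u$ with $\abs{\alpha}\le m$ lies in $C^{-r+\eps}_*$ or better, and $r+(-r+\eps)=\eps>0$ makes the product with $p_\alpha \in C^r_*$ meaningful. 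This is precisely the role of the hypothesis $u \in C^{m-r+\eps}_*$. Decomposing each product into a paraproduct $T_{p_\alpha} D^\alpha u$ plus a remainder, the remainders lie in $C^{m+\eps-\abs{\alpha}}_*$, are strictly more regular than $u$, and are handled together with the lower-order term $p^b u$ in the bootstrap below.

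I would then apply symbol smoothing to the paradifferential operator with symbol $p(x,\xi)=\sum_{\abs{\alpha}\le m} p_\alpha(x)(i\xi)^\alpha \in C^r_* S^m_{1,0}$. Fixing $\delta\in(0,1)$, write $p(x,D)=p^\#(x,D)+p^b(x,D)$ where $p^\#$ has a genuinely smooth symbol in $S^m_{1,\delta}$ and the remainder satisfies $p^b \in OPC^r_* S^{m-r\delta}_{1,\delta}$, hence acts as an operator of order $m-r\delta$ on the relevant Zygmund spaces. The principal symbol of $p^\#$ coincides with $p_m$ modulo lower order terms and therefore stays injective for $\xi\neq 0$.

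Injectivity lets me build a left parametrix: the matrix $p_m^*(x,\xi)p_m(x,\xi)$ is invertible for $\xi\neq 0$, so $e(x,\xi)=(p_m^* p_m)^{-1} p_m^* \in S^{-m}_{1,\delta}$, and $E=e(x,D)$ satisfies $E\,p^\#(x,D)=I+R$ with $R\in OPS^{-(1-\delta)}_{1,\delta}$ by the $S_{1,\delta}$ operator calculus. Applying $E$ to $p^\# u = f - p^b u$ yields the bootstrap identity
$$
u = Ef - E p^b u - R u .
$$
Since the symbol of $E$ is smooth, $E\colon C^\mu_*\to C^{\mu+m}_*$ gives $Ef\in C^{m+\mu}_*$; meanwhile $Ep^b$ has order $-r\delta$ and $R$ has order $-(1-\delta)$, so both $Ep^b u$ and $Ru$ are more regular than $u$ by the fixed amount $\kappa=\min(r\delta,\,1-\delta)>0$. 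Because $\mu>-r$, the total regularity gap $(m+\mu)-(m-r+\eps)=r+\mu-\eps$ is finite, so iterating the identity finitely many times raises the regularity of $u$ by $\kappa$ at each step until it saturates at $C^{m+\mu}_*$, the asserted conclusion.

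The step I expect to be the main obstacle is keeping uniform control of the mapping properties of the paradifferential operators on Zygmund spaces across the whole iteration, in particular confirming that every intermediate product and paraproduct remainder stays inside the admissible range forced by $-r<\mu<r$ and by the regularity of the successive iterates of $u$. This two-sided constraint on $\mu$ is what guarantees simultaneously that $Ef$ realizes the full gain of $m$ derivatives and that the low-regularity (resonant) contributions hidden inside $p^b$ remain well defined and of the claimed order $m-r\delta$. By contrast the overdetermined modification is essentially cosmetic once the left-parametrix symbol $(p_m^* p_m)^{-1}p_m^*$ is available, since the rest of the argument runs exactly as in the determined case.
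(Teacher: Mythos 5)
Your overall architecture coincides with the paper's: localize with a cutoff, extend the coefficients to $\mR^n$, perform symbol smoothing $p = p^{\sharp} + p^{\flat}$ with $p^{\sharp} \in S^m_{1,\delta}$ and $p^{\flat} \in C^r_* S^{m-r\delta}_{1,\delta}$, exploit injectivity of the principal symbol through the normal operator to get a \emph{left} parametrix, and close with a finite bootstrap constrained by the window $-r < \mu < r$. However, there is a genuine flaw in your parametrix step as written: you define $e(x,\xi) = (p_m^* p_m)^{-1} p_m^*$ from the \emph{rough} principal symbol $p_m$, which has only $C^r_*$ regularity in $x$, and then claim $e \in S^{-m}_{1,\delta}$. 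This is false --- membership in $S^{-m}_{1,\delta}$ requires smoothness in $x$, and without it the composition $E\,p^{\sharp}(x,D) = I + R$ with $R \in OPS^{-(1-\delta)}_{1,\delta}$ does not follow from the smooth calculus. The fix is exactly what the paper does: build the parametrix from the \emph{smoothed} symbol, setting $E = Q\,(p^{\sharp})^t(x,D)$ where $Q \in OPS^{-2m}_{1,\delta}$ is a parametrix for $(p^{\sharp})^t(x,D)\,p^{\sharp}(x,D)$. But this requires proving that symbol smoothing preserves overdetermined ellipticity quantitatively, i.e.\ that $(p^{\sharp})^t p^{\sharp}\, \zeta \cdot \bar\zeta \geq \frac{C}{2}\abs{\zeta}^2 \abs{\xi}^{2m}$ for $\abs{\xi}$ large. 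You wave at this (``coincides with $p_m$ modulo lower order terms''), but it is a real estimate: the paper's Lemma \ref{symbol_smoothing} derives it from the bound $\norm{p(\cdot,\xi) - J_\eps p(\cdot,\xi)}_{L^\infty} \leq C_r \eps^r \norm{p(\cdot,\xi)}_{C^r_*}$ summed over the Littlewood--Paley pieces, together with the symbol bounds $\norm{p}, \norm{p^{\sharp}} \lesssim \abs{\xi}^m$.

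A second, related gap is your extension step. You extend each $p_\alpha$ to a $C^r_*(\mR^n)$ function without ensuring that the extended principal symbol remains injective for \emph{all} $x \in \mR^n$, with a lower bound $\tilde p_m^t \tilde p_m \zeta \cdot \zeta \geq C \abs{\zeta}^2 \abs{\xi}^{2m}$ uniform in $x$; a generic extension can degenerate outside $B$, and without the uniform bound neither $e \in S^{-m}_{1,\delta}$ nor the global mapping properties used in your iteration are available. The paper handles this by reflecting the coefficients across $\partial B_1$ via the Kelvin transform and mollifying outside $\supp \chi$, so that the extended symbol is everywhere injective and close to $p_m(0,\xi)$ at spatial infinity. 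Finally, note that after cutting off, the error $\tilde f$ produced by commutators and lower-order terms lies only in $C^{-r+1}_*$ when $r > 1/2$, which may be \emph{worse} than $C^{\mu}_*$; the paper's iteration over slightly shrinking balls is needed to absorb this, whereas your paraproduct remainders (claimed to lie in $C^{m+\eps-\abs{\alpha}}_*$) are asserted rather than checked against the same constraint. These are all repairable within your framework, but as written the proof does not go through at the parametrix and extension steps.
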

\begin{proof}
We may assume that $B$ is centered at $0$. Let $B_0, B_1$ be balls centered at $0$ with $B_0 \subset \subset B_1 \subset \subset B$, let $\chi \in C^{\infty}_c(B_1)$ satisfy $\chi = 1$ near $\overline{B}_0$, and write $v = \chi u$. Writing $p_m(x,\xi) = \sum_{\abs{\alpha} = m} p_{\alpha}(x) \xi^{\alpha}$ we have 
 $$
 p_m(x,D)v = \chi f + \tilde{f}
 $$
 where $\tilde{f} \in C^{-r+1}_*(\mR^n)$ if $r > 1/2$ and $\tilde{f} \in C^r_*(\mR^n)$ if $0 < r \leq 1/2$.
 
We redefine the operator $p$ outside $\supp(\chi)$ as follows: let $K$ be the Kelvin transform for the ball $B_1$, and first define 
$$
q_{\alpha}(x) = \left\{ \begin{array}{ll} p_{\alpha}(x), & x \in \overline{B}_1, \\ p_{\alpha}(K(x)), & \text{otherwise}. \end{array} \right.
$$
Then $q_m(x,\xi) = \sum_{\abs{\alpha}=m} q_{\alpha}(x) \xi^{\alpha}$ is injective for all $x \in \mR^n$ and $\xi \neq 0$, and $q_m(x,\xi)$ is close to $p_m(0,\xi)$ if $\abs{x}$ is large. To obtain a symbol with $C^r_*$ regularity let $\tilde{\chi} \in C^{\infty}_c(B_1)$ satisfy $\tilde{\chi} = 1$ near $\supp(\chi)$,  and define 
$$
\tilde{p}_{\alpha} = \varphi_{\delta} \ast ((1-\tilde{\chi})q_{\alpha}) + \tilde{\chi} q_{\alpha}
$$
where $\varphi_{\delta}(x) = \delta^{-n} \varphi(x/\delta)$ and $\varphi \in C^{\infty}_c(\mR^n)$ is a standard mollifier supported in the unit ball with $\int_{\mR^n} \varphi(x) \,dx = 1$. One can check that $\tilde{p}_{\alpha} \in C^r_*(\mR^n)$, and if $\delta$ is small enough the symbol $\tilde{p}_m(x,\xi) = \sum_{\abs{\alpha}=m} \tilde{p}_{\alpha}(x) \xi^{\alpha}$ is injective for all $x$ and $\xi$, and moreover 
$$
\tilde{p}_m(x,\xi)^t \tilde{p}_m(x,\xi) \zeta \cdot \zeta \geq C  \abs{\zeta}^2 \abs{\xi}^{2m} , \quad x \in \mR^n, \ \ \xi \neq 0, \ \ \zeta \in \mC^n
$$
for some $C > 0$ by homogeneity and the fact that $\tilde{p}_m(x,\xi)$ is close to $p_m(0,\xi)$ for $\abs{x}$ large.
 
Since $\tilde{p}_{\alpha} = p_{\alpha}$ near $\supp(v)$, we have 
$$
\tilde{p}_m(x,D)v = \chi f + \tilde{f} \quad \text{in } \mR^n.
$$
Here $v \in C^{m-r+\eps}_*(\mR^n)$ and $\chi f \in C^{\mu}_*(\mR^n)$ with $-r < \mu < r$. If $0 < r \leq 1/2$ then $\tilde{f} \in C^{\mu}_*(\mR^n)$ and $u \in C^{m+\mu}_*(B_0)$ by using Proposition \ref{elliptic_regularity_new} below. If $r > 1/2$ then we have $\tilde{f} \in C^{-r+1}_*(\mR^n)$ instead, thus Proposition \ref{elliptic_regularity_new} implies $v \in C^{m+\min\{\mu,-r+1\}}_*$. If $\mu \leq -r+1$ we have $u \in C^{m+\mu}_*(B_0)$. Otherwise we observe that $u$ is $C^{m-r+1}_*$ near $\overline{B}_0$, and repeating the argument finitely many times with slightly smaller balls (and using the improved regularity of $\tilde{f}$) gives that $u \in C^{m+\mu}_*(B_0)$. Varying $B_0$ gives that $u$ is $C^{m+\mu}_*$ in $B$.
\end{proof}

\begin{prop}\label{elliptic_regularity_new}
Let $p \in C^r_* S^m_{1,0}(\mR^n)$ be an $M \times N$ matrix valued symbol where $m, r > 0$. Suppose that $p$ is overdetermined elliptic in the sense that there exist $C, K > 0$ such that  
\begin{equation}\label{ec}
p(x,\xi)^t p(x,\xi) \zeta \cdot \bar{\zeta} \geq C \abs{\zeta}^2|\xi|^{2m}, \quad x \in \mR^n, \ \ \abs{\xi} \geq K, \ \ \zeta \in \mC^n.
\end{equation}
If in the sense of distributions 
$$
p(x,D)u = f
$$
where $u \in C^{m-r+\eps}_{*}(\R^n)$ for some $\eps > 0$ and $f \in C^{\mu}_{*}(\R^n)$ with $-r < \mu < r$, then $u \in C^{m+\mu}_*(\R^n)$.
\end{prop}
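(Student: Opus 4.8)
The plan is to follow the symbol-smoothing approach of Taylor \cite{ToolsPDE}, \cite{T3}: replace the rough elliptic symbol by a smooth one in an exotic symbol class plus a controllable lower-order remainder, construct a \emph{left} parametrix (the operator is overdetermined rather than square, so we only have injectivity of the symbol), and then run a bootstrap. The overdetermined structure is handled throughout by working with $(p^\#)^t p^\#$, which is genuinely elliptic of order $2m$ by hypothesis \eqref{ec}.

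First I would perform symbol smoothing: fix $\delta \in (0,1)$ and decompose $p = p^\# + p^b$, where, via a Littlewood--Paley decomposition in the $x$ variable, $p^\#$ is smooth in $x$ and lies in $S^m_{1,\delta}$, while the remainder satisfies $p^b \in C^r_* S^{m - r\delta}_{1,\delta}$. Since $p^\#$ differs from $p$ by a symbol of strictly lower order, it inherits the ellipticity bound \eqref{ec}, i.e. $(p^\#)^t p^\# \geq c\,|\xi|^{2m}$ for $|\xi|$ large. Because the $N\times N$ matrix $(p^\#)^t p^\#$ is then an elliptic symbol of order $2m$, I would build a left parametrix $E \in \mathrm{OP}S^{-m}_{1,\delta}$, with principal symbol $((p^\#)^t p^\#)^{-1}(p^\#)^t$, satisfying $E\,p^\#(x,D) = I + R$ with $R$ smoothing, using the symbolic calculus for $S_{1,\delta}$ (valid since $\delta < 1$).

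Applying $E$ to $p(x,D)u = f$ and writing $p = p^\# + p^b$ yields
\begin{equation*}
u = E f - E\, p^b(x,D)\,u - R u .
\end{equation*}
Now I would estimate each term with Zygmund-space mapping properties. The term $Ru$ is $C^\infty$ since $R$ is smoothing; and $E \in \mathrm{OP}S^{-m}_{1,\delta}$ maps $C^\mu_* \to C^{m+\mu}_*$, so $Ef \in C^{m+\mu}_*$. The decisive term is $E\,p^b u$: the mapping property for operators with $C^r_*$-regular symbols gives that if $u \in C^\sigma_*$ then $p^b(x,D)u \in C^{\sigma - (m - r\delta)}_*$, whence $E\,p^b u \in C^{\sigma + r\delta}_*$ — a gain of $r\delta$ over the regularity of $u$ — provided the intermediate indices stay in the admissible window $m - r < \sigma < m + r(1-\delta)$ forced by the calculus. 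Starting from $u \in C^{m-r+\eps}_*$ (just inside the lower edge of this window) and iterating finitely many times, the regularity of $u$ increases by $r\delta$ at each step until it is capped by the $C^{m+\mu}_*$ regularity of $Ef$, giving $u \in C^{m+\mu}_*$.

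The main obstacle is the bookkeeping of the Hölder--Zygmund indices across the bootstrap: the mapping theorems for symbols in $C^r_* S^\mu_{1,\delta}$ are valid only for a restricted range of smoothness exponents, so I must verify that the hypotheses $u \in C^{m-r+\eps}_*$ and $-r < \mu < r$ keep every intermediate index admissible. This is arranged by choosing $\delta$ small enough that $\mu < r(1-\delta)$, so that the target regularity $m+\mu$ lies below the upper admissibility threshold $m + r(1-\delta)$ while the per-step gain $r\delta$ still closes the argument in finitely many iterations.
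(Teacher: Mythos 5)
Your proposal is correct and follows essentially the same route as the paper: symbol smoothing $p = p^{\sharp} + p^{\flat}$ with $p^{\sharp} \in S^m_{1,\delta}$ and $p^{\flat} \in C^r_* S^{m-r\delta}_{1,\delta}$, transfer of the overdetermined ellipticity to $p^{\sharp}$, and a left parametrix $E = Q\,(p^{\sharp})^t(x,D) \in OPS^{-m}_{1,\delta}$ built from a parametrix $Q$ for the elliptic square $(p^{\sharp})^t p^{\sharp}$, exactly as in the paper's Lemma on symbol smoothing. The bootstrap you spell out, with the gain of $r\delta$ per step and the admissibility window $m-r < \sigma < m + r(1-\delta)$, is precisely the part the paper delegates to the proof of Theorem 14.4.2 in Taylor's book, so your write-up just makes that citation explicit.
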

\begin{proof}
We apply techniques from the calculus of pseudodifferential operators with nonsmooth coefficients. Let $\delta\in (0,1)$. Symbol smoothing allows us to write $p(x,\xi)$ as a sum of a smooth symbol of order $m$ and a symbol of order slightly less than $m$ with the same smoothness as $p(x,\xi)$, that is, 
$$
p(x,\xi) = p^{\sharp}(x,\xi) + p^{\flat}(x,\xi)
$$
with $p^{\sharp} \in S^m_{1,\delta}(\R^n)$ and $p^{\flat} \in C^{r}_* S^{m-\eps}_{1,\delta}(\R^n)$ with $\eps=r\delta>0$. See Lemma~\ref{symbol_smoothing} below. By the same lemma there is a left parametrix $E$ for $p^\sharp$ of class $OPS^{-m}_{1,\delta}(\R^n)$. The rest of the proof is exactly the same as the corresponding part in the proof of Theorem 14.4.2 in~\cite{T3}.
\end{proof}

\begin{lemma}\label{symbol_smoothing}
Let $p \in C^r_* S^m_{1,0}(\mR^n)$ be an $M \times N$ matrix valued symbol where $m, r > 0$. Let $\delta >0$. Then there is a symbol smoothing for $p(x,\xi)$ of the form
$$
p(x,\xi) = p^{\sharp}(x,\xi) + p^{\flat}(x,\xi)
$$
where $p^{\sharp} \in S^m_{1,\delta}(\R^n)$ and $p^{\flat} \in C_*^{r} S^{m-\eps}_{1,\delta}(\R^n)$ with $\eps=r\delta>0$.

If in addition $p(x,\xi)$ is overdetermined elliptic in the sense of~\eqref{ec}, then $p^\sharp$ is also overdetermined elliptic and there is a left parametrix for $p^\sharp$ that belongs to $OPS^{-m}_{1,\delta}(\R^n)$.
\end{lemma}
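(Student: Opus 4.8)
The plan is to follow the standard symbol smoothing construction of \cite[Chapter 3.2]{ToolsPDE} and the proof of Theorem~14.4.2 in~\cite{T3}, carried out entrywise on the matrix $p$, and then to build the left parametrix by passing to the square symbol $(p^\sharp)^t p^\sharp$. First I would fix a Littlewood--Paley partition of unity $\{\Psi_k\}_{k \geq 0}$ in the $\xi$ variable, with $\Psi_k$ supported in $\{\abs{\xi} \sim 2^k\}$ for $k \geq 1$, and set
$$
p^\sharp(x,\xi) = \sum_{k \geq 0} (J_{\eps_k} p)(x,\xi)\, \Psi_k(\xi), \qquad \eps_k = 2^{-k\delta}, \qquad p^\flat = p - p^\sharp,
$$
where $J_{\eps}$ denotes mollification in the $x$ variable at scale $\eps$, applied to each entry of $p$. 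Since the symbol estimates below are phrased in terms of sup-norms of matrix entries and their derivatives, the matrix structure plays no role at this stage and the scalar arguments of the references apply verbatim.

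Second, I would verify the two symbol class estimates. For $p^\sharp$, each $x$-derivative falling on $J_{\eps_k} p$ costs a factor $\eps_k^{-1} = 2^{k\delta}$, while $\xi$-derivatives behave as usual on the support of $\Psi_k$; this yields $p^\sharp \in S^m_{1,\delta}(\R^n)$. For $p^\flat = \sum_k (p - J_{\eps_k} p)\Psi_k$, the key input is that $p(\cdot,\xi) \in C^r_*$ uniformly after normalizing by the order $m$ bound, so that on the support of $\Psi_k$, where $\abs{\xi} \sim 2^k$, one has $\norm{p - J_{\eps_k} p}_{L^\infty} \lesssim \eps_k^{r}\, 2^{km} = 2^{k(m - r\delta)}$; the full $C^r_*$ regularity in $x$ is retained, giving $p^\flat \in C^r_* S^{m-\eps}_{1,\delta}(\R^n)$ with $\eps = r\delta$. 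These are exactly the estimates established in the references for scalar symbols, and they transfer to matrices without change.

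Third, for the elliptic assertion I would observe that since $p^\flat \in S^{m-\eps}_{1,\delta}$ is of strictly lower order, there is $K' \geq K$ such that $\abs{p^\flat(x,\xi)} \leq \tfrac{1}{2}\sqrt{C}\,\abs{\xi}^m$ for $\abs{\xi} \geq K'$. Writing $p^\sharp = p - p^\flat$ and combining this with the overdetermined ellipticity \eqref{ec} of $p$, a direct perturbation argument gives $(p^\sharp)^t p^\sharp \zeta \cdot \bar\zeta \geq C'' \abs{\zeta}^2 \abs{\xi}^{2m}$ for $\abs{\xi}$ large, i.e.\ $p^\sharp$ is overdetermined elliptic. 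In particular the $N \times N$ symbol $a := (p^\sharp)^t p^\sharp \in S^{2m}_{1,\delta}$ is selfadjoint and elliptic in the usual invertible sense.

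Finally, the parametrix. The operator $(p^\sharp)^t(x,D)\, p^\sharp(x,D) \in OPS^{2m}_{1,\delta}$ has principal symbol $a$ and is therefore a \emph{square} elliptic operator, so the standard $S^{}_{1,\delta}$ calculus (whose composition and elliptic parametrix construction are valid in the relevant range $0 < \delta < 1$) produces $B \in OPS^{-2m}_{1,\delta}$ with $B \circ \big[(p^\sharp)^t(x,D)\, p^\sharp(x,D)\big] = I + R$, $R \in OPS^{-\infty}$. Setting $E = B \circ (p^\sharp)^t(x,D) \in OPS^{-m}_{1,\delta}$ then gives $E\, p^\sharp(x,D) = I + R$, which is the desired left parametrix. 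I expect the main obstacle to be the non-square, overdetermined nature of $p^\sharp$: the usual elliptic parametrix applies only to square invertible symbols, and the device that circumvents this is precisely to parametrize the genuinely invertible square symbol $a = (p^\sharp)^t p^\sharp$ and compose on the right with $(p^\sharp)^t(x,D)$. Everything else is a routine transcription of the scalar theory, once the symbol smoothing estimates are seen to be insensitive to the matrix structure.
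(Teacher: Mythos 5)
Your proposal is correct and follows essentially the same route as the paper: the same Littlewood--Paley symbol smoothing $p^\sharp = \sum_k (J_{\eps_k}p)\psi_k$ with the symbol class memberships from Taylor, a perturbation argument showing $(p^\sharp)^t p^\sharp \geq \frac{1}{2}C\abs{\xi}^{2m}$ for large $\abs{\xi}$, and the left parametrix $E = Q\,(p^\sharp)^t(x,D)$ obtained from a parametrix $Q \in OPS^{-2m}_{1,\delta}$ for the square elliptic symbol $(p^\sharp)^t p^\sharp$. The only (harmless) variation is that you deduce the smallness $\norm{p^\flat(x,\xi)} \lesssim \langle\xi\rangle^{m-\eps}$ directly from $p^\flat \in C^r_* S^{m-\eps}_{1,\delta}$, whereas the paper re-derives the $L^\infty$ bound on $p - p^\sharp$ explicitly from the mollification estimate of \cite[Lemma 13.9.8]{T3}.
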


\begin{proof}
The decomposition is defined as in~\cite[Section 13.9]{T3}. That is, define
$$
p^{\sharp}(x,\xi) = \sum_{j=0}^{\infty} J_{\eps_j} p(x,\xi) \psi_j(\xi)
$$
where $(\psi_j)$ is a Littlewood-Paley partition of unity satisfying $\sum_{j=0}^{\infty} \psi_j = 1$, $\eps_j = 2^{-j\delta}$, and $J_\eps$ is the Fourier multiplier 
$$
J_{\eps} f(x) = (\phi(\eps D) f)(x)
$$
where $\phi \in C^{\infty}_c(\mR^n)$ with $\phi = 1$ for $\abs{\xi} \leq 1$. By exactly the same argument used in~\cite[Proposition 13.9.9]{T3}, we have that $p^{\sharp} \in S^m_{1,\delta}(\R^n)$ and $p^{\flat} \in C_*^{r} S^{m-\eps}_{1,\delta}(\R^n)$ with $\eps = r\delta$. 

By~\cite[Lemma 13.9.8]{T3}, $J_\eps$ satisfies
$$
||p(\cdot,\xi)-J_\eps p(\cdot,\xi)||_{L^\infty(\R^n)}\leq C_{r}\eps^{r} ||p(\cdot,\xi)||_{C^r_*(\R^n)},
$$
with $C_{r}$ independent of $\eps$. We also have the estimates
\begin{align*}
 ||p(x,\xi)||&\leq C_1 |\xi|^m \\
 ||p^\sharp(x,\xi)||&\leq C_2 |\xi|^m \\
 ||p(\cdot,\xi)||_{C_*^r(\R^n)}&\leq C_3 |\xi|^m,  
\end{align*}
holding for $x \in \mR^n$ and $|\xi|\geq K_2>0$ since $p(x,\xi)\in C^r_*S^m_{1,0}(\R^n)$ and $p^\sharp(x,\xi)\in S^m_{1,\delta}(\R^n)$; see~\cite[p. 46]{T3}. The norms are defined with respect to the Hilbert-Schmidt matrix norm. Choose $M$ so large that
$$
C_{r} \eps_j^{r}\leq \frac{1}{2C_3(C_1 + C_2)}C
$$
for all $j\geq M$. Let us define $L=\max(K,K_2,2^{M-1})$.

Let $\xi\in \R^n$ with $|\xi|\geq L$. It follows from the definition of the Littlewood-Paley partition $(\psi_j)$ that $\psi_j(\xi)\equiv0$ if $j < M$. Thus we have for $|\xi|\geq L$ the estimate
\begin{align*}
||p(\cdot,\xi)-p^\sharp(\cdot,\xi)||&_{L^\infty(\R^n)}\leq \sum_{j \geq M}||p(\cdot,\xi)-J_{\eps_j}p(\cdot,\xi)||_{L^\infty(\R^n)}\psi_j(\xi) \\
  &\leq C_r \sum_{j\geq M} \eps_j^r||p(\cdot,\xi)||_{C_*^{r}(\R^n)}\psi_j(\xi)\leq \frac{1}{2(C_1 + C_2)}C \abs{\xi}^m.
\end{align*}
Subsequently, we have
\begin{multline*}
||p(x,\xi)^tp(x,\xi)-p^\sharp(x,\xi)^tp^\sharp(x,\xi)||   \leq ||p(x,\xi)^t||
||p(x,\xi) -p^\sharp(x,\xi)|| \\ +||p(x,\xi)^t-p^\sharp(x,\xi)^t||||p^\sharp(x,\xi)||  \leq \frac{1}{2}C \abs{\xi}^{2m}
\end{multline*}
whenever $\abs{\xi}\geq L$. It follows that 
\begin{align}\label{distance_between}
p^\sharp&(x,\xi)^t p^\sharp(x,\xi) \zeta \cdot \bar{\zeta} = \l(p^\sharp(x,\xi)^t p^\sharp(x,\xi)- p(x,\xi)^t p(x,\xi)\r)\zeta \cdot \bar{\zeta}\nonumber \\
  &+ p(x,\xi)^t p(x,\xi) \zeta \cdot \bar{\zeta} \geq - \abs{\zeta}^2||p(x,\xi)^tp(x,\xi)-p^\sharp(x,\xi)^tp^\sharp(x,\xi)||\nonumber  \\ 
  &+ C\abs{\zeta}^2\abs{\xi}^{2m}\geq \frac{1}{2} C \abs{\zeta}^2\abs{\xi}^{2m},
\end{align}
for $x \in \mR^n$,  $\abs{\xi} \geq L$, $\zeta \in \mC^n$. Thus $p^\sharp$ is overdetermined elliptic.

From~\eqref{distance_between} it follows that the smallest eigenvalue of $p^\sharp(x,\xi)^t p^\sharp(x,\xi)$ is at least $C\abs{\xi}^{2m}/2$ for $\abs{\xi}\geq L$.
Therefore, the inverse of $p^\sharp(x,\xi)^tp^\sharp(x,\xi)$ satisfies
$$
||\l(p^\sharp(x,\xi)^tp^\sharp(x,\xi)\r)^{-1}||\leq \frac{N^{1/2}}{\frac{1}{2}\abs{\xi}^{2m}}
$$
for $\abs{\xi}\geq L$, showing that $p^\sharp(x,\xi)^tp^\sharp(x,\xi)$ is an elliptic $N \times N$ matrix symbol in $S^{2m}_{1,\delta}(\R^n)$. Thus $(p^{\sharp})^t(x,D) p^\sharp(x,D)$ has a parametrix, say $Q$, of class $OPS^{-2m}_{1,\delta}(\R^n)$; see e.g.~\cite[Ch. 4]{T2}. Now, 
$$
E=Q\,(p^{\sharp})^t(x,D)
$$
is a left parametrix for $p^\sharp(x,D)$ of class $OPS^{-m}_{1,\delta}(\R^n)$.
\end{proof}

\end{document}